\documentclass[12pt,a4paper]{amsart}
\usepackage{amsfonts,amssymb,amsmath,amsthm}
\usepackage{enumerate}
\usepackage{bbm}
\usepackage{amssymb}
\usepackage{mathrsfs}
\usepackage[usenames]{xcolor}
\usepackage[left=2.5cm, top=2.5cm,bottom=3cm,right=2.5cm]{geometry}
\usepackage{graphicx}

\newtheorem{theorem}{Theorem}
\newtheorem{lem}[theorem]{Lemma}

\theoremstyle{definition}
\newtheorem{definition}[theorem]{Definition}
\newtheorem{remark}[theorem]{Remark}
\numberwithin{equation}{section}


\DeclareMathOperator{\disp}{disp}

\newcommand{\pointset}[1]{\ensuremath{{\mathcal P}_{#1}}}
\newcommand{\fiblat}[1]{\ensuremath{{\mathcal F}_{#1}}}
\newcommand{\N}{\ensuremath{{\mathbb N}}}

\newcommand{\eps}{\varepsilon}

\newcommand{\dint}{\,{\rm d}}


\author{ Simon Breneis \and Aicke Hinrichs}
\date{\today}
\address[Simon Breneis]{Institut f\"ur Analysis\\
Johannes Kepler Universit\"at Linz\\
Altenbergerstrasse 69\\
4040 Linz\\
Austria}
\email{simon.breneis@jku.at}
\address[Aicke Hinrichs]{Institut f\"ur Analysis\\
Johannes Kepler Universit\"at Linz\\
Altenbergerstrasse 69\\
4040 Linz\\
Austria}
\email{aicke.hinrichs@jku.at}

\thanks{Both authors are supported by the Austrian Science Fund (FWF) Project F5513-N26, which is a part of the Special Research Program ``Quasi-Monte Carlo Methods: Theory and Applications''.
AH would like to thank the Isaac Newton Institute for 
Mathematical Sciences for support and hospitality during the programme
``Approximation, sampling and compression in data science''
when work on this paper was undertaken. 
This work was supported by EPSRC Grant Number EP/R014604/1.
AH was also supported by a grant from the
Simons Foundation.}

\keywords{}
\subjclass{}

\begin{document}

\title[Fibonacci lattices have minimal dispersion]{Fibonacci lattices have minimal dispersion on the two-dimensional torus}

\begin{abstract}
We study the size of the largest rectangle containing no point of a given point set in the two-dimensional torus, the dispersion of the point set. A known lower bound for the dispersion of any point set of cardinality $n\ge 2$ in this setting is $2/n$. We show that if $n$ is a Fibonacci number then the Fibonacci lattice has dispersion exactly $2/n$ meeting the lower bound. Moreover, we completely characterize integration lattices achieving the lower bound and provide insight into the structure of other optimal sets. We also treat related results in the nonperiodic setting.   
\end{abstract}

\maketitle

\section{Introduction and main result}

We identify the two-dimensional torus with $[0,1]^2$. 
Any two points $x,y \in [0,1]^2$ define a rectangle $B(x,y)$ in the two-dimensional torus.
If $x=(x_1,x_2), y=(y_1,y_2)$ satisfy $x_1 \le y_1$ and $x_2 \le y_2$, this is the ordinary rectangle $B(x,y) = [x_1,y_1] \times [x_2,y_2]$. If $x_1 > y_1$ and $x_2 \le y_2$, then $B(x,y)= \big([0,y_1] \cup [x_1,1]\big) \times [x_2,y_2]$ is wrapped around in the direction of the first coordinate axis. Analogously, for $x_1 \le y_1$ and $x_2 > y_2$, it is wrapped around the direction of the second coordinate axis, and for $x_1 > y_1$ and $x_2 > y_2$ around both axis, see Figure \ref{fig:perbox}.

\begin{figure}[htb]\label{fig:perbox}
\caption{Periodic rectangles}
    \centering
    \begin{minipage}[t]{0.25\linewidth}
        \includegraphics[width=0.95\linewidth]{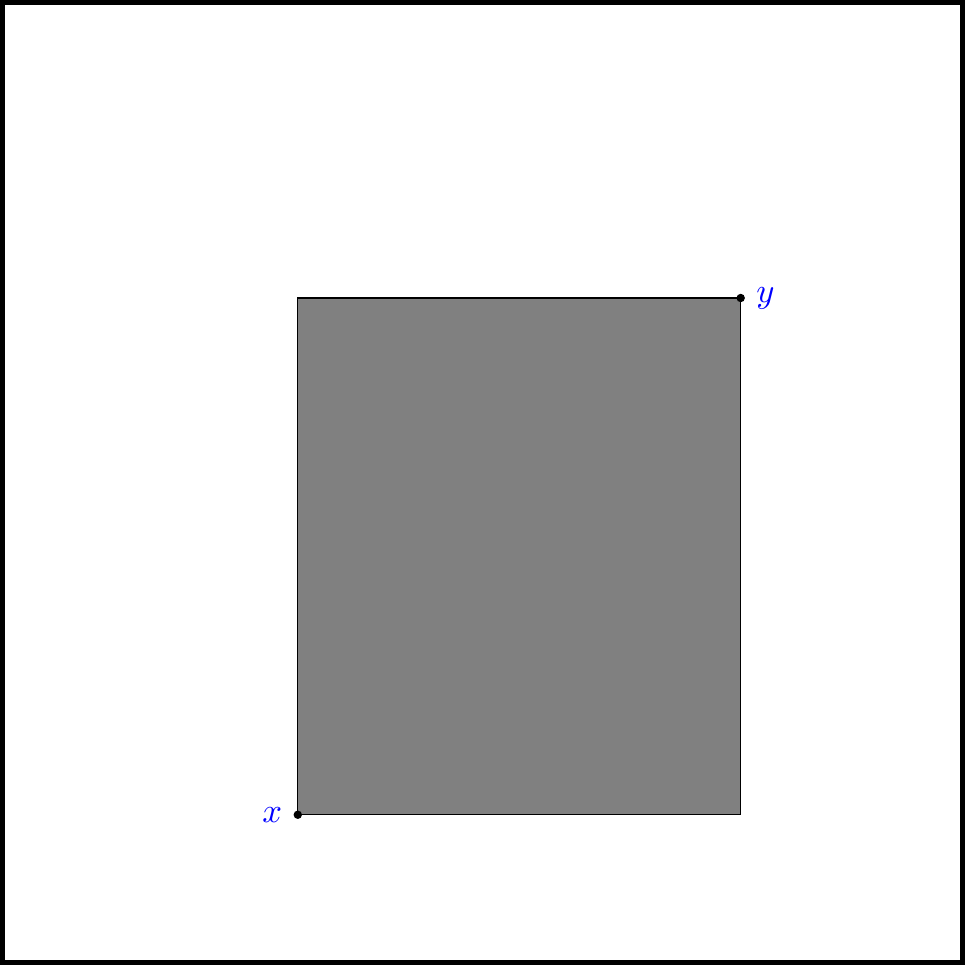}        
    \end{minipage}%
    \hfill
    \begin{minipage}[t]{0.25\linewidth}
        \includegraphics[width=0.95\linewidth]{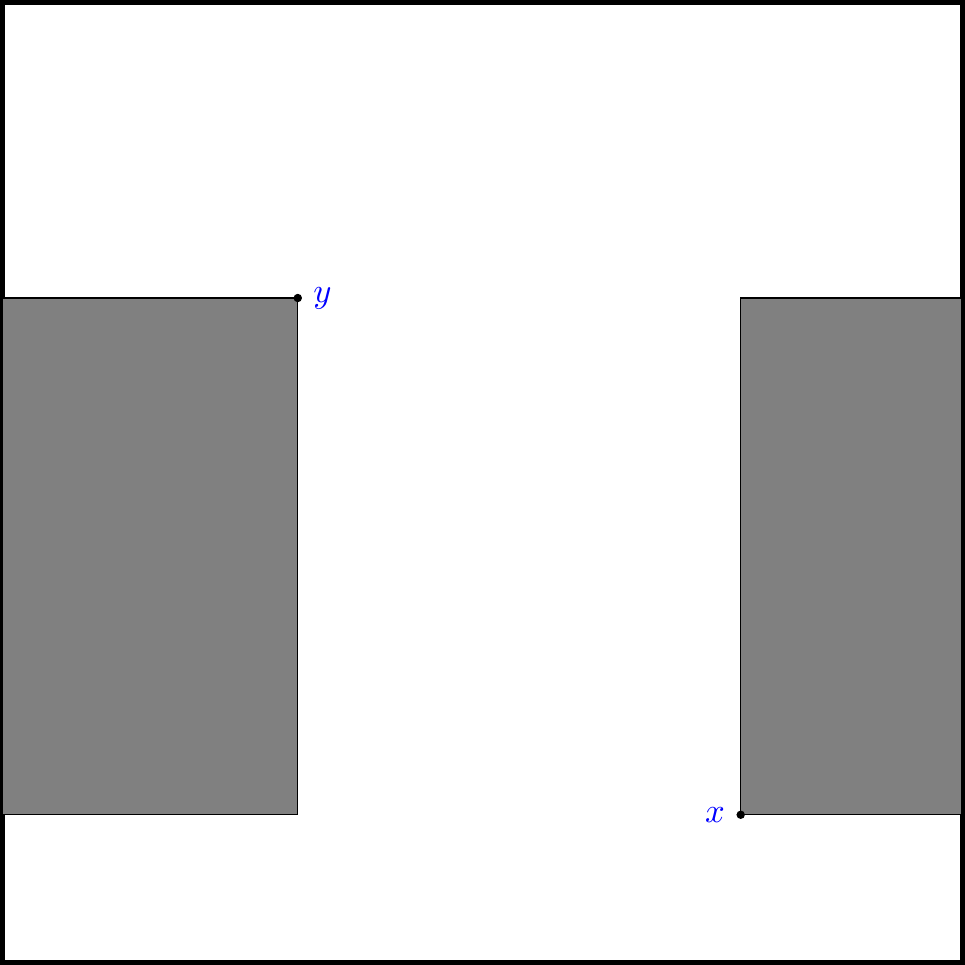}
    \end{minipage}%
    \hfill
    \begin{minipage}[t]{0.25\linewidth}
        \includegraphics[width=0.95\linewidth]{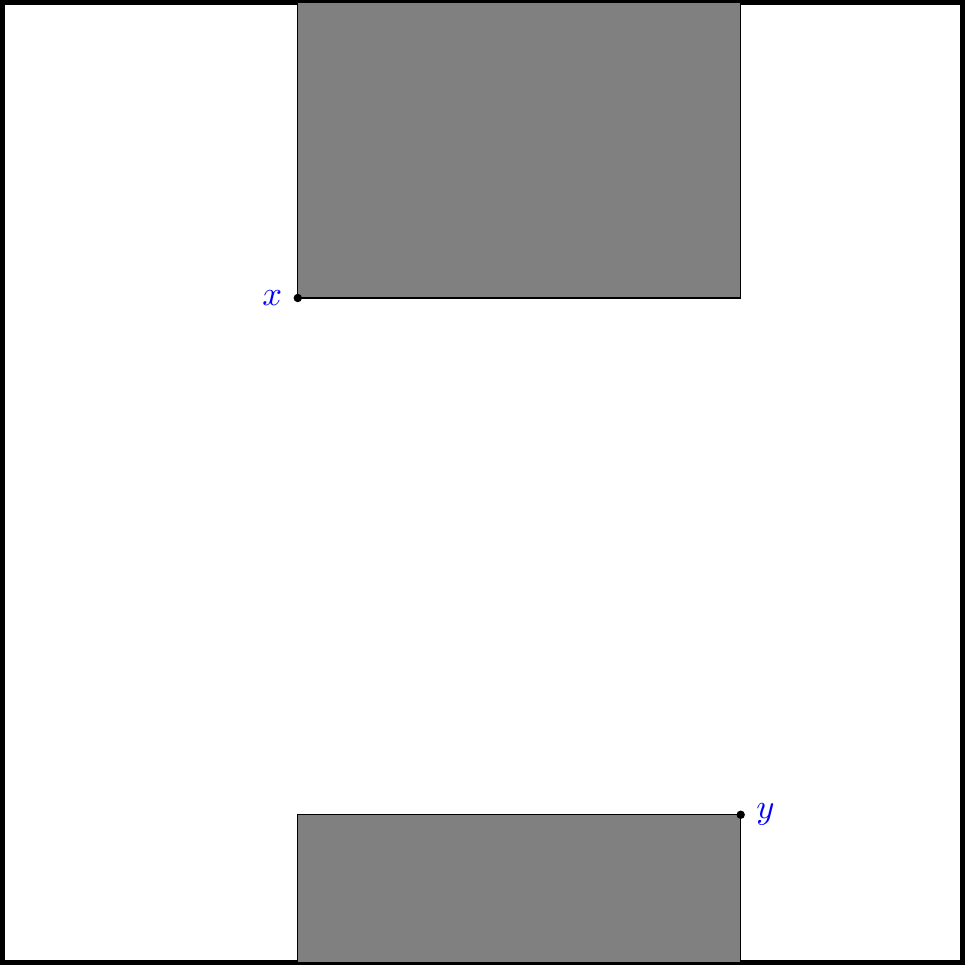} 
    \end{minipage}%
    \hfill
    \begin{minipage}[t]{0.25\linewidth}
        \includegraphics[width=0.95\linewidth]{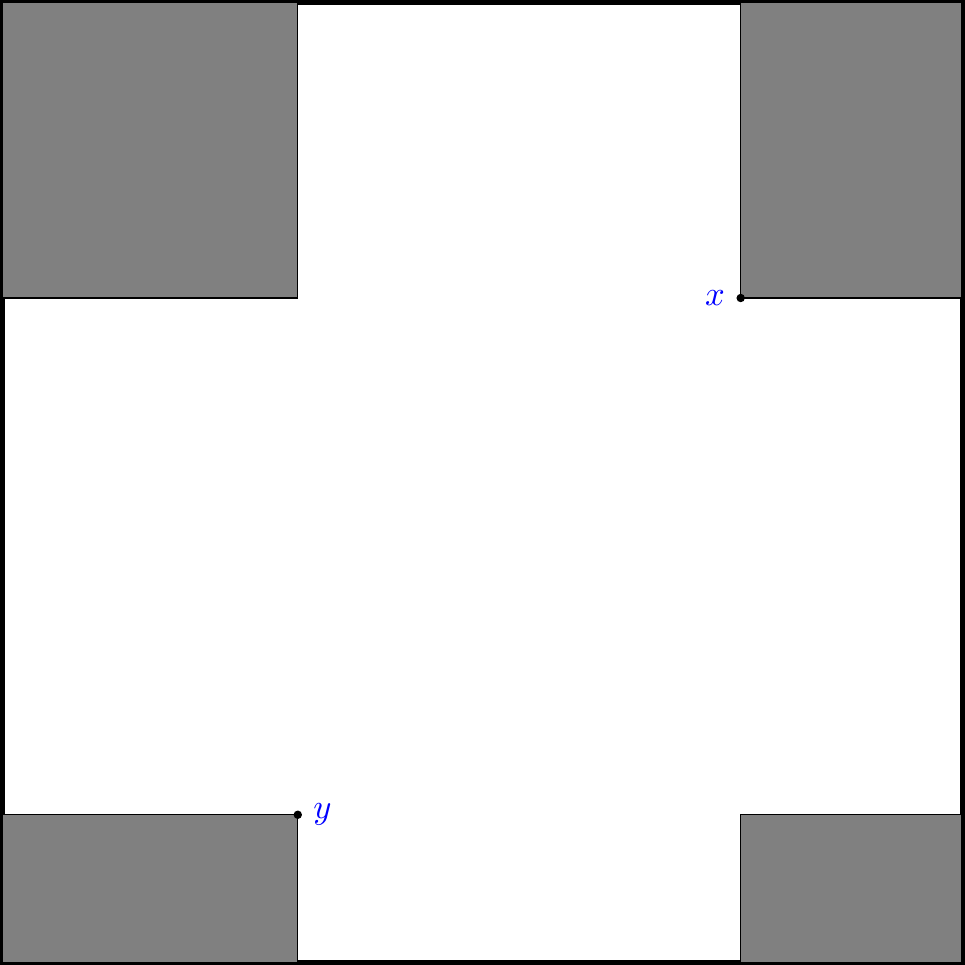}
    \end{minipage}
\end{figure}

For a given finite point set $\pointset{} \subset [0,1]^2$, the dispersion $\disp(\pointset{})$ of  $\pointset{}$ is the area of the largest rectangle $B(x,y)$ containing no point of $\pointset{}$ in the interior. The following lower bound follows as a special case from the result of M. Ullrich in \cite{U2018} for the $d$-dimensional torus in the case $d=2$.

\begin{theorem}\label{thm:lowerbound}
 For any $n\in \N$ with $n\ge 2$ and any point set $\pointset{n} \subset [0,1]^2$ 
 with $ \#\pointset{n} = n$, we have $$ \disp(\pointset{n}) \ge \frac{2}{n}. $$
\end{theorem}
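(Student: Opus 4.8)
\emph{Proof proposal.} The statement is quoted from Ullrich's paper, but for $d=2$ there is a short self-contained proof, which I would give as follows. The idea is: for every point of $\pointset{n}$ I produce an empty periodic rectangle that ``jumps over'' exactly that one point, and then I average the areas of these $n$ rectangles; the averaging argument is what yields the constant $2/n$ rather than the trivial $1/n$.

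Concretely, I would write $\pointset{n}=\{P_1,\dots,P_n\}$ with $P_i=(x_i,y_i)$ and $0\le x_1\le \dots\le x_n<1$, and extend the first coordinates cyclically by $x_0:=x_n-1$ and $x_{n+1}:=x_1+1$. Fix $i\in\{1,\dots,n\}$ and consider the periodic vertical strip whose first coordinate is the image modulo $1$ of the open interval $(x_{i-1},x_{i+1})$, of length $\ell_i:=x_{i+1}-x_{i-1}$ (one checks $\ell_i\le 1$ in all cases). By monotonicity of the $x_j$, the only point of $\pointset{n}$ whose first coordinate can lie in this open arc is $P_i$, and even that happens only when $x_{i-1}<x_i<x_{i+1}$. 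Hence, for every $\eps>0$, the periodic rectangle $B$ whose first coordinate is the closed arc $[x_{i-1},x_{i+1}]\bmod 1$ and whose second coordinate is the circle with an $\eps$-neighbourhood of $y_i$ deleted (a periodic interval of length $1-\eps$) contains no point of $\pointset{n}$ in its interior: the point $P_i$, if it lies in the strip at all, is excluded by the second coordinate, and every other $P_j$ has its first coordinate outside the open arc and hence does not lie in the interior of $B$. This gives $\disp(\pointset{n})\ge \ell_i(1-\eps)$, and letting $\eps\to 0$ yields $\disp(\pointset{n})\ge \ell_i$ for each $i$.

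To finish, I would simply average. Telescoping gives
\[
\sum_{i=1}^{n}\ell_i=\sum_{i=1}^{n}(x_{i+1}-x_{i-1})=(x_n+x_{n+1})-(x_0+x_1)=\big(x_n+(x_1+1)\big)-\big((x_n-1)+x_1\big)=2,
\]
so some $\ell_i\ge 2/n$, and therefore $\disp(\pointset{n})\ge 2/n$.

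The points requiring a little care, none of them serious, are the bookkeeping when $i\in\{1,n\}$ (the arc in the first coordinate then wraps around the torus, so that the first coordinate of $B$ is itself one of the ``wrapped'' intervals permitted in the definition of $B(x,y)$, and in the degenerate case $\ell_i=1$ one also trims it by $\eps$), and checking that the $B$ described above really is of the form $B(x,y)$ for a suitable pair $x,y$. The one genuinely substantive idea — and the reason the bound is $2/n$ and not merely $1/n$ — is the step where two consecutive gaps between $x$-coordinates are merged into a single strip of width $x_{i+1}-x_{i-1}$ that still contains only one point, which can then be avoided in the vertical direction at asymptotically no cost.
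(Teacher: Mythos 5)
Your argument is correct, and it reaches the bound by a genuinely different (though related) averaging mechanism than the paper. The paper slides a window of fixed width $2/n$ across the first coordinate, integrates the counting function $n(x)$ of the strip $[x,x+2/n)\times[0,1)$ to get $\int_0^1 n(x)\,\mathrm{d}x=2$, and then runs a case analysis; the delicate case $n(x)\equiv 2$ requires an extra structural argument to exhibit an empty box of size exactly $2/n$. You instead fix the points, sort the first coordinates, attach to each point the ``double gap'' $\ell_i=x_{i+1}-x_{i-1}$, observe that the corresponding strip contains only $P_i$ strictly inside so that $\disp(\pointset{n})\ge\ell_i$ after deleting an $\eps$-neighbourhood of $y_i$ in the wrapped second coordinate, and conclude by the telescoping identity $\sum_i\ell_i=2$ and pigeonhole. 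Your discrete version is cleaner for the bare inequality: it gives the non-strict bound $\disp(\pointset{n})\ge\ell_i$ uniformly and needs no boundary-case analysis, and your handling of ties, of the wrapped arcs for $i\in\{1,n\}$, and of the degenerate case $\ell_i=1$ is adequate. What you lose is the byproduct the paper extracts precisely from its equality case: the structural description of optimal point sets (first coordinates forming a shifted grid $\{\xi+k/n\}$ for odd $n$, and the two-shifted-grids picture for even $n$), which the paper reuses in its characterization of optimal lattices and in the construction of the distorted Fibonacci examples. So your route proves Theorem \ref{thm:lowerbound} as stated, but one would still need the paper's sliding-window analysis (or an adaptation of your telescoping argument tracking when equality forces all $\ell_i=2/n$) to recover those structural consequences.
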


The main purpose of this note is the investigation whether, and if so, for which sets, this bound is sharp.
  
It is well understood that the Fibonacci lattice has exceptional uniform distribution properties. We shortly discuss some results in this direction for the discrepancy and the dispersion as measures of uniform distribution. Let $(F_m)_{m\in\N}$ be the sequence of Fibonacci numbers starting with $F_1=F_2=1$ and defined via the recursive relation $F_{m+2}=F_m+F_{m+1}$ for $m\ge 2$. The Fibonacci lattice $\fiblat{m}$ is defined as 
$$\fiblat{m} := \Big\{\Big(\frac{k}{F_m},\Big\{\frac{kF_{m-2}}{F_m}\Big\}\Big): k\in\{0,1,...,F_m-1\}\Big\}.$$
Here, $\{\alpha\}$ denotes the fractional part of $\alpha$.

The Fibonacci lattice is an example of an integration lattice. A general integration lattice in dimension $d=2$ has the form
$$\Big\{\Big(\frac{k}{n},\Big\{\frac{k q}{n}\Big\}\Big): k\in\{0,1,...,n-1\}\Big\}.$$ Here $n$ and $1\leq q < n$ are positive integers. The number $q$ is called the generator of the integration lattice. It is sometimes required to be coprime to $n$. We do not make this additional requirement here. Observe that an integration lattice consists of $n$ points in $[0,1)^2$. For the theory of integration lattices and applications to numerical integration we refer to \cite{SJ1994}.  

It is well-known that the Fibonacci lattice has order optimal $L_\infty$- and $L_2$-discrepancy 
For the $L_\infty$-discrepancy we refer to the monograph \cite{N92} of H. Niederreiter.
For the classical $L_2$-discrepancy this was first proved by V. S\'os and S. K. Zaremba in \cite{SZ79}. 
For the periodic $L_2$-discrepancy it is even conjectured that the Fibonacci lattice is globally optimal among all point sets with the same number of points, see \cite{HO2016}. This is proved in \cite{HO2016} for $n = F_m \le 13$. Among integration lattices, the Fibonacci lattice has minimal periodic $L_2$-discrepancy at least if $n=F_m \le 832040$. This can be shown by a not particularly sophisticated exhaustive search through all integration lattices using a suitable simplification of the Warnock formula for the periodic $L_2$-discrepancy of integration lattices.

For the dispersion, it was proved by V. Temlyakov in \cite{T19} that the Fibonacci lattice is order optimal, i.e. that there exists a constant $c$ such that 
$$ \disp(\fiblat{m}) \le \frac{c}{F_m}. $$
The main purpose of this note is to show that the bound in Theorem \ref{thm:lowerbound} is actually sharp for the Fibonacci lattices.
In particular, we show the following theorem.

\begin{theorem}\label{thm:upperboundfib}
 Let $m\ge 3$ be an integer. The Fibonacci lattice $\fiblat{m}$ satisfies
 $$ \disp(\fiblat{m}) = \frac{2}{F_m}. $$
\end{theorem}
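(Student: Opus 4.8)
The bound $\disp(\fiblat m)\ge 2/F_m$ is exactly Theorem~\ref{thm:lowerbound}, so the whole point is the matching upper bound: every open rectangle $B(x,y)$ on the torus containing no point of $\fiblat m$ in its interior has area at most $2/F_m$. The plan is to first cut this down to a finite combinatorial inequality. By a routine compactness argument the dispersion is attained by some empty rectangle $B=I_1\times I_2$, and we may take $B$ to be maximal, i.e.\ no edge can be pushed outward without catching a point. We may assume $\mathrm{area}(B)>1/F_m$ (otherwise there is nothing to prove), which forces both side lengths to exceed $1/F_m$. Since $\gcd(F_{m-2},F_m)=1$, both coordinate projections of $\fiblat m$ equal $\{0,1/F_m,\dots,(F_m-1)/F_m\}$; combining this with maximality, each edge of $B$ passes through a lattice point (pushing, say, the left edge of $B$ to the left, it is halted by the first lattice point whose second coordinate lies in $I_2^\circ$, and such a point exists because $|I_2|>1/F_m$). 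Hence the endpoints of $I_1,I_2$ lie in $\tfrac1{F_m}\Z$; write their lengths as $(t+1)/F_m$ and $(s+1)/F_m$, where $t$ (resp.\ $s$) is the number of lattice points with first (resp.\ second) coordinate in the interior of $I_1$ (resp.\ $I_2$). Setting $v=(1/F_m,\{F_{m-2}/F_m\})$, so that $\fiblat m=\{0,v,\dots,(F_m-1)v\}$ is a cyclic group in $(\R/\Z)^2$, the goal becomes $(t+1)(s+1)\le 2F_m$.

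Next I would read off the constraint from the three‑distance theorem. The $t$ lattice points with first coordinate in $I_1^\circ$ are $t$ consecutive points $kv,(k+1)v,\dots$ of the group, and emptiness of $B$ says precisely that their second coordinates avoid the $s$ consecutive elements of $\tfrac1{F_m}\Z$ inside $I_2^\circ$. Equivalently: the first $t$ points of the orbit of $x\mapsto x+F_{m-2}$ on $\Z/F_m\Z$ leave a run of at least $s+1$ consecutive residues uncovered, so their largest gap is $\ge s+1$. The relevant approximation constants of the rotation by $F_{m-2}/F_m$ are all of the form $F_{m-i}/F_m$, because $F_iF_{m-2}\equiv(-1)^iF_{m-i}\pmod{F_m}$ (a one‑line induction on $i$), and feeding this into the three‑distance theorem yields: whenever $t\ge F_j$, the first $t$ orbit points leave no gap longer than $F_{m-j+2}$. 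Choosing $j$ maximal with $F_j\le t$, we get $s+1\le F_{m-j+2}$ and $t+1\le F_{j+1}$.

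Finally, repeated use of the identity $F_aF_b+F_{a-1}F_{b-1}=F_{a+b-1}$, together with $F_{m+1}+F_{m-2}=2F_m$, gives
$$ F_{j+1}F_{m-j+2}=2F_m-F_{j-2}F_{m-j-1}\le 2F_m, $$
so $(t+1)(s+1)\le F_{j+1}F_{m-j+2}\le 2F_m$, which is the required inequality; together with Theorem~\ref{thm:lowerbound} this proves $\disp(\fiblat m)=2/F_m$. Equality forces $F_{j-2}F_{m-j-1}=0$, hence $j\in\{2,m-1\}$, and unwinding these two cases recovers the two optimal rectangles: the strip of width $2/F_m$ positioned so that the single lattice point it contains lies on a horizontal edge, and its image under the coordinate swap (which sends $\fiblat m$ to the lattice with generator $F_{m-1}$).

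The step I expect to be the genuine obstacle is the three‑distance estimate, and in particular making it uniform over the full range $F_j\le t<F_{j+1}$ at the value $j=m-1$: the continued fraction of $F_{m-2}/F_m$ skips the denominator $F_{m-1}$ (it is only a semiconvergent), and the bound inherited from the convergent $F_{m-2}$ gives only $\le F_4$, which is too weak when $t$ is close to $F_m$. The cleanest fix seems to be a direct proof of the dual assertion: if the first $t$ orbit points avoided $F_{m-j+2}$ consecutive residues, then reading the orbit backwards (using $-F_{m-2}\equiv F_{m-1}$ and $F_{m-2}^{2}\equiv\pm1\pmod{F_m}$, so that ``add $1$'' is again an orbit step) these residues would be $F_{m-j+2}$ consecutive orbit points squeezed into an interval of only $F_m-t\le F_m-F_j$ consecutive residues, contradicting the span of $F_{m-j+2}$ consecutive orbit points — and for $j=m-1$ this span is the trivial one coming from two consecutive orbit points, so there is no circularity. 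Everything else is bookkeeping with Fibonacci recursions.
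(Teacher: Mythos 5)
Your proposal is correct, but it takes a genuinely different route from the paper. The paper never argues directly about maximal empty rectangles for $\fiblat{m}$; it proves the stronger Theorem~\ref{thm:intlats} by an induction (Lemma~\ref{lem:intlatticefibapproximation}, built on the three-gap theorem and on Lemma~\ref{lem:splitlargestdistance}, that each new point splits the largest gap) which, for a general integration lattice with generator $q$ under the optimality hypothesis, pins down the exact splitting induced by the first $F_k-j$ points and the constraints $F_k/F_{k-2}\le n/q\le F_{k-1}/F_{k-3}$ (or reversed), from which Theorem~\ref{thm:upperboundfib} is read off, essentially via the formula of Remark~\ref{rem:formula}. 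You instead specialize to the Fibonacci rotation from the outset: maximality plus $\gcd(F_{m-2},F_m)=1$ reduces the upper bound to $(t+1)(s+1)\le 2F_m$; the three-distance theorem together with $F_iF_{m-2}\equiv(-1)^iF_{m-i}\pmod{F_m}$ gives $s+1\le F_{m-j+2}$ and $t+1\le F_{j+1}$ for $j$ maximal with $F_j\le t$; and the identity $F_{j+1}F_{m-j+2}+F_{j-2}F_{m-j-1}=2F_m$ finishes. I checked the two points you flag: for $j\le m-2$ the gap bound does follow, since $F_j$ is a convergent denominator of $F_{m-2}/F_m$, the largest gap at exactly $F_j$ points equals $F_{m-j+1}+F_{m-j}=F_{m-j+2}$, and the largest gap is non-increasing in $t$; and your duality fix for $j=m-1$ is sound and non-circular: if $t\ge F_{m-1}$ points had a gap of at least $3$, multiplying by $F_{m-2}^{-1}\equiv(-1)^mF_{m-2}$ turns the two missed consecutive residues into two consecutive orbit points (at distance $F_{m-2}$) lying inside the complement of $t$ consecutive residues, an arc of length $F_m-t-1<F_{m-2}$ --- a contradiction, with only the trivial two-point gap structure as input. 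As for what each approach buys: the paper's induction yields the complete classification of optimal integration lattices (Theorem~\ref{thm:intlats}) together with the sufficiency statement, which your argument does not address, while your argument is shorter and self-contained for the Fibonacci lattice itself, in effect rederiving the formula $\disp(\fiblat{m})=F_m^{-2}\max_{3\le k\le m}F_kF_{m-k+3}$ of Remark~\ref{rem:formula} in the guise of $F_{j+1}F_{m-j+2}\le 2F_m$. If you write it up, spell out the exact-gap input of the three-distance theorem at $t=F_j$ (which you currently assert as ``feeding in'') and the torus corner case in which pushing an edge wraps all the way around; the latter is exactly what your assumption that the area exceeds $1/F_m$ rules out.
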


It may be conjectured that, up to torus symmetries, the Fibonacci lattices are the only point sets meeting the lower bound in Theorem \ref{thm:lowerbound}. 
This is not true.
The second purpose of this note is to discuss the structure of general optimal sets.
At least for integration lattices, we get a complete characterization. 

\begin{theorem}\label{thm:intlats}
 If $ \disp(\pointset{n}) = 2/n $ for some integration lattice $\pointset{n}$ with $n \ge 2$ points, then $n=F_m$ is a Fibonacci number and $\pointset{n}$ is torus symmetric to the Fibonacci lattice $\fiblat{m}$ or $n=2 F_m$ is twice a Fibonacci number and $\pointset{n}$ is torus symmetric to the lattice with generator $q=2 F_{m-2}$.
\end{theorem}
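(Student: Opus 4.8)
The plan is to reduce $\disp(L(n,q))$ to a combinatorial quantity and then to decide when it equals $2$. Since the first coordinates of $\pointset n=L(n,q)$ are precisely $0,1/n,\dots,(n-1)/n$, an empty periodic rectangle of positive area meets some number $c$ of consecutive ``columns'' $\{j/n\}$, and its second coordinate must avoid the $c$ corresponding lattice points; pushing its vertical edges out to the two neighbouring columns makes the width $(c+1)/n$ unless $c=n$. This gives
$$\disp(L(n,q))=\frac1n\max_{0\le c\le n}\min(c+1,\,n)\,G_c,$$
where $G_c$ is the length of the longest gap that the set $\{\,jq/n\bmod 1:0\le j<c\,\}$ leaves on $\R/\Z$; this set has $\min(c,\,n/\gcd(q,n))$ points, and $G_c$ is independent of which $c$ consecutive columns were chosen, by translation invariance. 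Since $G_1=1$, already the term $c=1$ gives $\disp(L(n,q))\ge 2/n$, so equality holds if and only if $G_c\le\frac2{c+1}$ for $1\le c\le n-1$ together with $\gcd(q,n)\le 2$ (the term $c=n$ is the empty strip of height $\gcd(q,n)/n$).

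Next I would dispose of $\gcd(q,n)=2$. Writing $n=2n'$ and $q=2q'$ with $\gcd(q',n')=1$, the second coordinates are the points $\{jq'/n'\}$, each taken twice, so $G_c$ for $L(2n',2q')$ equals $G_{\min(c,n')}$ for $L(n',q')$. Substituting this into the formula, the terms with $c>n'$ contribute exactly $2$ while the terms with $c\le n'$ reproduce the condition for $L(n',q')$; hence $\disp(L(2n',2q'))=2/(2n')$ if and only if $\disp(L(n',q'))=2/n'$, and the torus symmetry $q\mapsto 2F_m-q$ gives the normalisation $q=2F_{m-2}$ in the statement. It therefore remains to show: if $\gcd(q,n)=1$ and $G_c\le\frac2{c+1}$ for all $c$, then $q/n$ — equivalently $(n-q)/n$, using the symmetry $q\mapsto n-q$ — is a quotient of consecutive Fibonacci numbers, so $n=F_m$ and $L(n,q)$ is torus symmetric to $\fiblat m$.

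For this I would use the three-distance theorem to describe $G_c$ through the continued fraction $q/n=[0;a_1,\dots,a_r]$, with convergent denominators $q_k$ and Euclidean remainders $m_k$ (so $m_{-1}=n$, $m_0=q$, and $q_km_{k-1}+q_{k-1}m_k=n$). The relevant fact is that the longest gap is constant and equal to $m_{k-1}/n$ for all $c$ in the block $q_{k-1}+q_k\le c<\min(q_{k-1}+2q_k,\,q_{k+1})$, so the strongest constraint from a nonempty such block is $(q_{k-1}+2q_k)\,m_{k-1}\le 2n$ at its right endpoint; using $q_km_{k-1}+q_{k-1}m_k=n$ this becomes $m_{k-1}\le 2m_k$, i.e.\ $(a_{k+1}-2)m_k+m_{k+1}\le 0$, which forces $a_{k+1}=1$ unless $k+1=r$ (where $m_r=0$ allows $a_r=2$). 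Together with the first run, which (through the blocks collapsing to single values $c=1,\dots,a_1$) forces $n/3\le q\le 2n/3$ and hence $a_1\le 2$ except for $n\le 3$, this leaves only the continued fractions $[0;a_1,1,\dots,1,a_r]$ with $a_1,a_r\in\{1,2\}$, whose convergent denominators are consecutive Fibonacci numbers; so $n=F_m$ and $q\in\{F_{m-1},F_{m-2}\}\bmod F_m$, and $L(n,q)$ is torus symmetric to $\fiblat m$. With the $\gcd(q,n)=2$ reduction this proves the theorem, and the converse — that both families attain $2/n$ — follows from Theorem~\ref{thm:upperboundfib} and the reduction above.

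The real work is the last step. One must establish the precise three-distance description of the longest gap, and the bookkeeping genuinely differs in the two boundary runs: for $k=0$ one has $q_{-1}=0$, so the blocks degenerate and the operative constraints are $(j+1)\bigl(n-(j-1)q\bigr)\le 2n$ for $1\le j\le a_1$, while for $k=r-1$ the value $c=n$ is harmless but $c=n-1$ is exactly tight, and the small cases $n\in\{2,3\}$ should be verified directly. Checking that the block endpoints are indeed the worst values of $c$, that no constraint has been missed, and that the admissible continued fractions are exactly the Fibonacci ones is where care is needed.
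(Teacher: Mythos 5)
Your proposal is correct in outline and reaches the theorem by a genuinely different organization of the same underlying structure. Where the paper proves the splitting dynamics itself (Lemma \ref{lem:splitlargestdistance}) and then runs the induction of Lemma \ref{lem:intlatticefibapproximation}, producing the alternating two-sided bounds $F_k/F_{k-2}\le n/q\le F_{k-1}/F_{k-3}$ and finishing with a best-approximation argument in which the doubled Fibonacci lattices appear as the non-reduced endpoint $n/q=F_{m-1}/F_{m-3}$, you make the reduction $\disp=\frac1n\max_{c}\min(c+1,n)G_c$ explicit, peel off $\gcd(q,n)=2$ by a separate halving argument, and harvest necessary conditions from the continued-fraction form of the three-gap theorem: the term $c=2$ forces $n/3\le q\le 2n/3$ (hence $a_1\le 2$ for $n>3$), and each block with $a_{k+1}\ge 2$ forces $(q_{k-1}+2q_k)m_{k-1}\le 2n$, which via $q_km_{k-1}+q_{k-1}m_k=n$ is indeed equivalent to $m_{k-1}\le 2m_k$ and hence to $a_{k+1}=1$ unless $k+1=r$, $a_r=2$. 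That leaves exactly the continued fractions with all inner quotients equal to $1$, i.e.\ $q/n\in\{F_{m-1}/F_m,\,F_{m-2}/F_m\}$, and your $\gcd=2$ reduction then yields the second family; since the statement is only the necessity direction, using a subset of the available constraints is perfectly legitimate. Your route buys an explicit closed-form for the dispersion of a lattice and localizes the constraints at block endpoints; the paper's induction buys, at the same time, the sufficiency of its inequalities, which your constraint harvest alone does not give.

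Three caveats. First, the range on which the largest gap equals $m_{k-1}/n$ is $q_{k-1}+q_k\le c<q_{k-1}+2q_k$, with no cap at $q_{k+1}$: when $a_{k+1}=1$ this range is nonempty and extends past $q_{k+1}$, so your capped blocks silently drop genuine (satisfied) constraints. This is harmless for necessity, since you only invoke a block when $a_{k+1}\ge 2$, where your description coincides with the correct one, but it would bite if you tried to extract sufficiency from the same bookkeeping. Second, the refined largest-gap formula is precisely the content the paper supplies through Lemmas \ref{lem:splitlargestdistance} and \ref{lem:intlatticefibapproximation}; you assert it and rightly flag it as the real work, so a complete write-up must prove it (it follows from the split-the-largest-gap dynamics or from the standard quantitative three-distance theorem), and the boundary runs $k=0$ and $k=r-1$ plus $n\in\{2,3\}$ need the direct checks you indicate. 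Third, your closing appeal to Theorem \ref{thm:upperboundfib} for the converse would be circular within the paper, where that theorem is deduced from this one; but the converse is not part of the statement, so nothing is lost.
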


Clearly, Theorem \ref{thm:intlats} immediately implies Theorem \ref{thm:upperboundfib}, so we will prove only Theorem \ref{thm:intlats}.

For the convenience of the reader, we provide a short proof of Theorem \ref{thm:lowerbound} in 
Section \ref{sec2}. 
In particular, this proof also shows that any point set $\pointset{n}$ with $n$ points satisfying $ \disp(\pointset{n}) = 2/n$ 
has to be of a certain structure. This structure is then further employed in Section \ref{sec4a} to give examples of point sets with optimal dispersion that are not integration lattices. 

Section \ref{sec3} contains the proof of our main result Theorem \ref{thm:upperboundfib}.  
In Section \ref{sec4} we compute the nonperiodic dispersion of the Fibonacci lattice, which turns out to be only slighty smaller than $2/F_m$. 
We finish with a final section containing a discussion of related results in particular also in higher dimensions. 

\section{Proof of Theorem \ref{thm:lowerbound}} \label{sec2}

We now give a simple proof of  Theorem \ref{thm:lowerbound}. This proof is basically the same as the proof for general dimension in \cite{U2018}.

Fix a point set $\pointset{n}$ with $n$ points.
For $x\in [0,1)$, let $n(x)$ be the number of points in the (periodic) rectangle $B(x)=[x,x+2/n) \times [0,1)$. Then each point in  $\pointset{n}$ is in $B(x)$ for a set of $x$ of measure exactly $2/n$. Hence
$ \int_0^1 n(x) \dint x = 2.$ 

Assume first that $n(x)<2$ for some $x$. Then, for this $x$, either $n(x)=0$ or $n(x)=1$. 
Then, for some $\eps>0$, also the rectangle $B=[x-\eps,x+2/n) \times [0,1)$ contains at most one point of $\pointset{n}$. Splitting the box along the second coordinate of this point, if it exists, we obtain a periodic rectangle of size  $2/n+\eps$ containing no points of 
$\pointset{n}$ in its interior showing that $ \disp(\pointset{n})>2/n$.

If $n(x) > 2$ for some $x$, then $ \int_0^1 n(x) \dint x = 2$ implies that $n(x)<2$ for some $x$, again $ \disp(\pointset{n})>2/n$ follows.

The only case not considered is the case that $n(x)=2/n$ for every $x\in [0,1)$. 
Let $x$ be the first coordinate of a point in the point set. Then we obtain that there exists exactly one point in the pointset with $x$-coordinate in $(x,x+2/n)$ and that there is exactly one point in the pointset with $x$-coordinate equal to $x+2/n$. In particular, splitting the rectangle $(x,x+2/n) \times [0,1]$ along the second coordinate of the (only) point in this rectangle gives an empty rectangle of size $2/n$ and $ \disp(\pointset{n}) \ge 2/n$ follows. 

Moreover, if $n$ is odd,
this implies that the $x$-coordinates of  points of $\pointset{n}$ form the set $\{ \xi+k/n : k=0,1,\dots,n-1\}$ for some $\xi\in [0,1/n)$.  
If $n$ is even, the situation is a little different. Then $n(x)=2/n$ for every $x\in [0,1)$ only implies that  $\pointset{n}$ is the union of two sets 
$\{ \xi_i+2k/n : k=0,1,\dots,n/2-1\}$ for some $\xi_1,\xi_2 \in [0,2/n)$.
Similar reasoning can be applied to the second coordinate instead of the first coordinate.

Altogether, we proved Theorem \ref{thm:lowerbound} together with structural properties of pointsets meeting the bound. In particular, if $n$ is odd,  any pointset $\pointset{n}$ with $n$ points satisfying $ \disp(\pointset{n}) = 2/n$ is, up to torus symmetries, a lattice point set  of the type
$$\Big\{\Big(\frac{k}{n},\Big\{\frac{\pi(k)}{n}\Big\}\Big): k\in\{0,1,...,n-1\}\Big\}$$
for some permutation $\pi$ of the set $\{0,1,...,n-1\}$.

\section{Proof of Theorems \ref{thm:upperboundfib} and \ref{thm:intlats}} \label{sec3}

Throughout this section we fix an integration lattice 
$$\mathcal{P}_n = \Big\{\Big(\frac{k}{n},\Big\{\frac{k q}{n}\Big\}\Big):k\in\{0,1,...,n-1\}\Big\}.$$
containing $n$ points with generator $q\in \{1,2,\dots,n-1\}$. 
We will prove Theorem \ref{thm:intlats}, Theorem \ref{thm:upperboundfib} is a direct consequence.
Our proof of Theorem \ref{thm:intlats} relies on a careful examination of the length of the intervals obtained by splitting the torus with the points $\big(\big\{\frac{k q}{n}\big\}\big)$.
To simplify the notation, we will scale the one-dimensional torus by a factor of $n$ and consider the sequence $\big( n\big\{\frac{kq}{n}\big\}\big).$ To this end, let
$y:\{0,1,...,n-1\}\to\{0,1,...,n-1\}$ be the function defined as 
$$y(k):=n\Big\{\frac{kq}{n}\Big\}.$$ 
Furthermore, let $$Y_\ell:=\big(y(k)\big)_{k=0}^{\ell-1}$$ denote the sequence of the first $\ell$ function values of $y$.

We now want to consider the distances between consecutive elements of the sequence $Y_\ell$.

\begin{definition}\label{def:distance}
Let $(x_k)_{k=0}^{\ell-1}$ be a sequence of $\ell$ elements of the one-dimensional torus scaled by $n$. 
Let $(y_k)_{k=0}^{\ell-1}$ be the non-decreasing rearrangement of the sequence $(x_k)_{k=0}^{\ell-1}$.
For $a,b \in [0,n]$ with $a \neq b$, let $d(a,b)$ denote the oriented scaled torus distance of the points $a$ and $b$, i.e. $d(a,b)= b-a$ for $b<a$ and $d(a,b)=n+b-a$ if $b<a$. We also set $d(a,a) = n$. We say that $c\in(0,n]$ is a distance of the sequence $(x_k)$ if there exists an $i\in\{1,...,\ell-1\}$ such that $$d(y_{i-1},y_i) = c$$ or if $$ d(y_{\ell-1},y_0) = c.$$
\end{definition}

The following lemma is a direct consequence of the Three-distance or Three-gap Theorem conjectured by H. Steinhaus and proved in the late 1950s by V. S\'os \cite{S1958}, J. Sur\'anji \cite{S1958a}, and S. \'Swierczkowski \cite{S1959}.
\begin{lem}\label{lem:threedistances}
For any $\ell\in \{1,...,n\}$, the sequence $Y_\ell$ has at most three different distances. If $Y_\ell$ has three different distances $d_1>d_2>d_3$, then $d_1 = d_2 + d_3$.
\end{lem}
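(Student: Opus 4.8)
The plan is to obtain this as an immediate translation of the Three-distance Theorem cited just above, applied to the parameter $\alpha = q/n$. Unwinding the notation, $y(k) = n\{kq/n\}$, so $Y_\ell$ is, after dividing by the scaling factor $n$, exactly the collection of the first $\ell$ points $\{0\cdot\alpha\},\{1\cdot\alpha\},\dots,\{(\ell-1)\alpha\}$ of the Kronecker sequence with $\alpha = q/n$ on the circle $\R/\mathbb{Z}$. First I would treat the generic situation in which these $\ell$ points are pairwise distinct. Then, reading off Definition \ref{def:distance}, the distances of $Y_\ell$ are precisely $n$ times the lengths of the $\ell$ arcs into which the points partition the circle: for $i\in\{1,\dots,\ell-1\}$ the quantity $d(y_{i-1},y_i)$ is the length of the arc between two cyclically consecutive points lying in the interior of $[0,n)$, and $d(y_{\ell-1},y_0)$ is the wrap-around arc. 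The Three-distance Theorem says these arc lengths take at most three distinct values and, when all three occur, the largest is the sum of the other two; multiplying every length by $n$ does not change this, which is exactly the claim of the lemma in this case.

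It remains to handle the degenerate case in which some of the points $\{kq/n\}$, $0\le k\le \ell-1$, coincide. Put $g = \gcd(q,n)$ and write $q = gq'$, $n = gn'$ with $\gcd(q',n')=1$. Then $kq \bmod n = g\,(kq' \bmod n')$, so the map $k\mapsto y(k)$ has period $n' = n/g$ and its image is $\{0,g,2g,\dots,n-g\}$. Consequently, if $\ell \le n/g$ the points $y(0),\dots,y(\ell-1)$ are distinct and the previous paragraph applies verbatim; if $\ell > n/g$ then every multiple of $g$ in $\{0,\dots,n-g\}$ occurs among them, some with multiplicity at least two. In this last case the non-decreasing rearrangement of $Y_\ell$ consists of maximal blocks of equal entries separated by jumps of size exactly $g$, so each internal distance $d(y_{i-1},y_i)$ equals either $g$ (a jump to the next value) or $n$ (by the convention $d(a,a)=n$ inside a block), and since $y_0=0$, $y_{\ell-1}=n-g$ we get $d(y_{\ell-1},y_0)=g$ as well. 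Hence $Y_\ell$ has the distance set $\{g,n\}$, which has at most three elements and for which the three-term relation is vacuous; note $g=\gcd(q,n)\le q<n$, so these are genuinely two distinct values, though that plays no role in the bound.

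The only point requiring real care is the bookkeeping that identifies Definition \ref{def:distance} with the arc partition underlying the classical theorem — in particular getting the wrap-around term $d(y_{\ell-1},y_0)$ to line up, and reconciling the somewhat artificial convention $d(a,a)=n$ with the statement in the presence of coinciding lattice points. Beyond that I expect no difficulty: the quantitative heart of the lemma (at most three values; largest equals the sum of the other two) is imported directly from the Three-distance Theorem, and the scaling by $n$ is cosmetic.
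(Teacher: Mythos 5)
Your proposal is correct and follows essentially the same route as the paper, which gives no separate argument at all and simply declares the lemma a direct consequence of the Three-distance Theorem of S\'os, Sur\'anji and \'Swierczkowski; your translation of Definition \ref{def:distance} into arc lengths for $\alpha=q/n$ is exactly the intended reading. Your extra paragraph treating the non-coprime case $\gcd(q,n)>1$ (where points of $Y_\ell$ may coincide and the convention $d(a,a)=n$ kicks in) is a sensible piece of care that the paper passes over silently, and it is handled correctly.
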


We will now investigate how often those three distances occur. The following definition will be helpful in simplifying the notation. We also refer to the figure below for an instructive example.

\begin{definition}\label{def:distancepartition}
Let the sequence $Y_\ell$ have the distances $d_1 > d_2 > d_3$ $a_1,a_2,a_3$ times, respectively. Then we say $Y_\ell$ induces the splitting $$n = a_1d_1+a_2d_2+a_3d_3.$$ Notice that the equality holds if we interpret it algebraically.
If there are only one or two distances, the notation is used accordingly. In that case, we also use the notation above and allow $d_3=d_2$ if $a_3=0$ and $d_2=d_1$ if $a_3=a_2=0$.
\end{definition}

\begin{figure}\label{fig}
 \caption{Consecutive Splittings for $n=13$ and $q=5$}
 \includegraphics[scale=0.62]{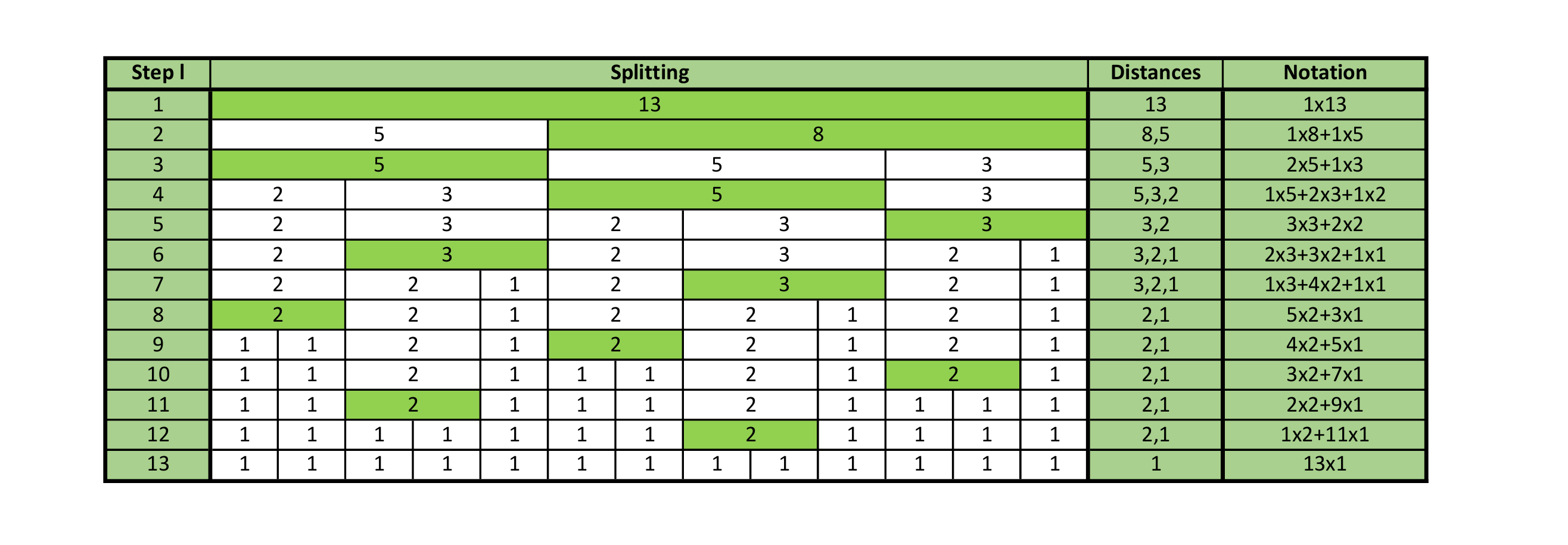}
\end{figure}

As it turns out, if we increase the number $\ell$ of points considered, we always end up splitting the largest distance.

\begin{lem}\label{lem:splitlargestdistance}
By going from $Y_\ell$ to $Y_{\ell+1}$, the largest distance from $Y_\ell$ is split.
\end{lem}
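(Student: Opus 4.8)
The plan is to describe precisely where the new point $y(\ell)$ falls inside the cyclic order of $Y_\ell$ and to match this against the (refined) Three-distance Theorem. For $\ell\ge 2$ let $P_\ell$ be the index in $\{1,\dots,\ell-1\}$ at which $y$ is smallest and $S_\ell$ the index at which $y$ is largest; then $y(P_\ell)$ is the point of $Y_\ell$ lying immediately clockwise after $0$ and $y(S_\ell)$ the one immediately clockwise before $0$. Beyond Lemma~\ref{lem:threedistances}, the classical proof of the Three-distance Theorem (\cite{S1958,S1958a,S1959}) yields the sharper statement: $P_\ell+S_\ell\ge\ell$, and $Y_\ell$ has exactly $\ell-P_\ell$ distances equal to $y(P_\ell)$, exactly $\ell-S_\ell$ distances equal to $n-y(S_\ell)$, and exactly $P_\ell+S_\ell-\ell$ distances equal to $y(P_\ell)+n-y(S_\ell)$; in particular the largest distance of $Y_\ell$ is $y(P_\ell)+n-y(S_\ell)$ when $P_\ell+S_\ell>\ell$, and $\max\{y(P_\ell),\,n-y(S_\ell)\}$ when $P_\ell+S_\ell=\ell$. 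The case $\ell=1$ is trivial, since $Y_1=\{0\}$ has the whole torus as its unique gap, which $y(1)$ then splits.

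The first step is to locate $y(\ell)$. We may assume $\gcd(n,q)=1$, so that $y(0),\dots,y(\ell)$ are pairwise distinct (in general one rescales $Y_\ell$ by $\gcd(n,q)$). For $0\le j\le\ell-1$ the oriented scaled distance from $y(j)$ to $y(\ell)$ equals $\big((\ell-j)q\big)\bmod n=y(\ell-j)$, and minimising over $j$ shows that the point of $Y_\ell$ lying immediately clockwise before $y(\ell)$ is $y(\ell-P_{\ell+1})$, at distance $y(P_{\ell+1})$. Likewise, the oriented distance from $y(\ell)$ to $y(j)$ equals $n-y(\ell-j)$, so the point of $Y_\ell$ immediately clockwise after $y(\ell)$ is $y(\ell-S_{\ell+1})$, at distance $n-y(S_{\ell+1})$. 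Since $y(\ell)\notin Y_\ell$, these two points are consecutive in $Y_\ell$; hence $y(\ell)$ lies inside a single gap $G$ of $Y_\ell$ — the one running from $y(\ell-P_{\ell+1})$ to $y(\ell-S_{\ell+1})$ — whose length is $y(P_{\ell+1})+n-y(S_{\ell+1})$.

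It remains to check that $G$ has maximal length among the gaps of $Y_\ell$. As $y(\ell)$ cannot be simultaneously the minimum and the maximum of $\{y(m):1\le m\le\ell\}$, at most one of $P_{\ell+1}=\ell$ and $S_{\ell+1}=\ell$ holds; otherwise $P_{\ell+1}=P_\ell$ and $S_{\ell+1}=S_\ell$. If neither index changes, the length of $G$ is $y(P_\ell)+n-y(S_\ell)$, and since $P_\ell+S_\ell=P_{\ell+1}+S_{\ell+1}\ge\ell+1$, this is exactly the largest distance of $Y_\ell$. If $P_{\ell+1}=\ell$ (the case $S_{\ell+1}=\ell$ is symmetric), then $y(\ell)<y(P_\ell)$, and one uses the fact — built into the inductive proof of the Three-distance Theorem — that $P$ can increase only at a step $\ell$ with $P_\ell+S_\ell=\ell$, i.e. once all the long gaps of $Y_\ell$ have been used up. Then $y(\ell)=y(P_\ell+S_\ell)=\big(y(P_\ell)+y(S_\ell)\big)\bmod n$, and since $y(\ell)<y(P_\ell)\le y(S_\ell)$ this forces $y(P_\ell)+y(S_\ell)>n$; hence $y(\ell)=y(P_\ell)+y(S_\ell)-n$, so the length of $G$ equals $y(\ell)+n-y(S_\ell)=y(P_\ell)$, which is $\max\{y(P_\ell),\,n-y(S_\ell)\}$ because $y(P_\ell)>n-y(S_\ell)$ and $P_\ell+S_\ell=\ell$. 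In both cases $G$ is a largest gap, as required.

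The delicate point is the assertion used in the last case — that $P_\ell$ (and symmetrically $S_\ell$) drops to a new value of $y$ only at a ``reset'' step where $P_\ell+S_\ell=\ell$, equivalently that as long as $Y_\ell$ still has a long gap the new point $y(\ell)$ splits such a gap into its two short pieces $y(P_\ell)$ and $n-y(S_\ell)$ and leaves $P$ and $S$ fixed. I would either cite the Three-distance Theorem in this refined form or prove it by the usual simultaneous induction, tracking the gap lengths and the position of the newest point together; that induction in fact delivers the present lemma directly as a by-product. Everything else is routine bookkeeping with the oriented distances $y(t)=(tq)\bmod n$.
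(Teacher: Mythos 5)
Your route is genuinely different from the paper's. The paper needs only Lemma~\ref{lem:threedistances} (at most three distances, largest $=$ sum of the other two) and argues by contradiction: if at some step a non-maximal distance were split, then by translation invariance ($y(\ell+k)$ always lies between $y(a+k)$ and $y(b+k)$) every later point could only split distances of length at most $d_2$, so the largest distance would survive into $Y_n$, contradicting the fact that $Y_n$ is equidistant. You instead work with the refined, quantitative three-distance theorem (gap lengths and multiplicities expressed through the indices $P_\ell$, $S_\ell$ of the minimal and maximal value), locate the new point $y(\ell)$ between $y(\ell-P_{\ell+1})$ and $y(\ell-S_{\ell+1})$, and do a case analysis. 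The localization and the case $P_{\ell+1}=P_\ell$, $S_{\ell+1}=S_\ell$ are correct and complete.

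The gap is exactly where you flag it: in the case $P_{\ell+1}=\ell$ you assert, without proof, that a new minimum can occur only when $P_\ell+S_\ell=\ell$. This is the crux of the whole lemma in that case (it is precisely what forbids the new point from landing in a short gap while long gaps still exist), and it is not part of the three-distance theorem as cited in the paper; saying it is ``built into the inductive proof'' and promising to cite or re-derive it leaves the decisive step unproved. The good news is that it closes within your own setup. By your localization the split gap runs from $y(0)=0$ to $y(\ell-S_\ell)$ and has length $y(\ell)+n-y(S_\ell)$; being an actual gap of $Y_\ell$, this length must be $y(P_\ell)$, $n-y(S_\ell)$ or $y(P_\ell)+n-y(S_\ell)$, and since $0<y(\ell)<y(P_\ell)$ only $y(\ell)+n-y(S_\ell)=y(P_\ell)$ is possible. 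Now compare multisets: the gaps of $Y_{\ell+1}$ are those of $Y_\ell$ with one gap of length $y(P_\ell)$ replaced by pieces $y(\ell)$ and $n-y(S_\ell)$, while the refined counts for $Y_{\ell+1}$ (with $P_{\ell+1}=\ell$, $S_{\ell+1}=S_\ell$) allow only the lengths $y(\ell)$, $n-y(S_\ell)$ and $y(\ell)+n-y(S_\ell)=y(P_\ell)$; since $y(P_\ell)+n-y(S_\ell)$ equals none of these, its multiplicity $P_\ell+S_\ell-\ell$ in $Y_\ell$ must be zero, which is the missing fact, and your computation then finishes the case. So the argument is salvageable, but as written it defers the essential step; and if you instead run the ``usual simultaneous induction'' you are essentially re-proving the refined three-gap theorem, which is considerably heavier machinery than the paper's short translation-invariance contradiction.
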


\begin{proof}
If $Y_\ell$ only has one distance, this is trivial. Suppose now that $Y_\ell$ induces the splitting $$n = a_1d_1+a_2d_2+a_3d_3,$$ where $a_3\ge 0$, i.e. we also consider the case that $Y_\ell$ only has two different distances. Clearly, $Y_n$ splits the torus into equidistant intervals, i.e. $Y_n$ has only one distance. Suppose now that by going from $Y_\ell$ to $Y_{\ell+1}$, we split either $d_2$ or $d_3$. Without loss of generality, we will assume we split $d_2$. By going from $Y_\ell$ to $Y_{\ell+1}$, we introduced the point $y(\ell)$. This point has a left neighbour, i.e. there exists an $a < \ell$ such that $B(y(a),y(\ell))$ contains no other point of the sequence $Y_{\ell+1}$. In the same way there exists a right neighbour $y(b)$. Since we split the distance $d_2$, we know that $d(y(a),y(b))=d_2$. It is easy to see that for any $k$ such that $\ell+k<n$ we have that $y(\ell+k)$ is in the (periodic) interval $(y(a+k),y(b+k)$. Thus, any point introduced after $y(\ell)$ can only split a distance which is at most $d_2$. This means that the distance $d_1$ is never split again. However, since $Y_n$ should induce a splitting with only one distance, this is clearly a contradiction. Thus, we always split $d_1$.
\end{proof}

From now on we will make additional assumptions on $n$ and $q$. On the one hand, we assume without loss of generality that $2 q \le n$. This is possible, since the integration lattices induced by $(n,q)$ and $(n,n-q)$ are torus symmetric. On the other hand, we assume that the integration lattice has optimal dispersion $2/n$, or, since we consider the scaled torus, dispersion $2n$. Since we are only interested in finding all optimal integration lattices, this is no real restriction.

The following lemma will give us an explicit formula for the splitting of $Y_\ell$ that will then directly imply Theorem \ref{thm:intlats}. Here we also use Fibonacci numbers $F_k$ with $k\le 0$, which satisfy the same recursion as for $k>0$.

\begin{lem}\label{lem:intlatticefibapproximation}
Let $m,k,j$ be positive integers satisfying $3 \le k \le m$, $F_m \le n < F_{m+1}$ and $1 \le j \le F_{k-2}$. 

If $k$ is odd, then
$Y_{F_k-j}$ induces the splitting 
\begin{equation} \label{eq:idsplittingoddk}
 n=j(F_{k-3}q-F_{k-5}n) + (F_{k-1}-j)(F_{k-4}n-F_{k-2}q) + (F_{k-2}-j)(F_{k-1}q-F_{k-3}n).
\end{equation}
Moreover, the fraction $\frac{n}{q}$ satisfies 
\begin{equation} \label{eq:splittingoddk}
 \frac{F_k}{F_{k-2}}\le \frac{n}{q}\le\frac{F_{k-1}}{F_{k-3}}.
\end{equation}

If $k$ is even, then $Y_{F_k-j}$ induces the splitting 
\begin{equation} \label{eq:idsplittingevenk}
n=j(F_{k-5}n-F_{k-3}q)+(F_{k-1}-j)(F_{k-2}q-F_{k-4}n)+(F_{k-2}-j)(F_{k-3}n-F_{k-1}q).
\end{equation}
Moreover, the fraction $\frac{n}{q}$ satisfies 
\begin{equation} \label{eq:splittingevenk} 
\frac{F_{k-1}}{F_{k-3}}\le\frac{n}{q}\le\frac{F_k}{F_{k-2}}.
\end{equation}

Moreover, the inequalities \eqref{eq:splittingoddk} and \eqref{eq:splittingevenk} for $3 \le k \le m$ are not only necessary but also sufficient for $\mathcal{P}_n$ to have minimal dispersion $2/n$.
\end{lem}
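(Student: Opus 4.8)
The plan is to prove the two splitting identities \eqref{eq:idsplittingoddk}, \eqref{eq:idsplittingevenk} together with the bounds \eqref{eq:splittingoddk}, \eqref{eq:splittingevenk} by a single induction on $k$, and to obtain the final ``sufficiency'' assertion by a direct verification afterwards. The first step is a reduction of the dispersion to the distances of the $Y_\ell$. Since $y(k)=kq\bmod n$ satisfies $y(a)+y(b)\equiv y(a+b)\pmod n$ and the $x$-coordinates of $\mathcal{P}_n$ are exactly $0,1/n,\dots,(n-1)/n$, a largest empty periodic rectangle of $x$-width $\ell/n$ contains precisely $\ell-1$ points in its interior, whose $y$-coordinates form a translate of $\{y(1),\dots,y(\ell-1)\}$; hence $\disp(\mathcal{P}_n)=2/n$ is equivalent to $\ell\,h(\ell)\le 2n$ for all $\ell\in\{1,\dots,n\}$, where $h(\ell)$ is the largest gap, in the scaled torus, of $\{y(1),\dots,y(\ell-1)\}$. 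As a set this is $Y_\ell$ with the point $0$ deleted, so $h(\ell)$ equals the larger of the biggest distance of $Y_\ell$ not touching $0$ and the sum of the two distances flanking $0$; by Lemma \ref{lem:threedistances} and the recursive structure of the $Y_\ell$ this is an explicit function of the splitting of $Y_\ell$ together with the two distances at $0$ (which are read off from the smallest and largest of the residues $kq\bmod n$, $1\le k<\ell$). The whole problem is thereby reduced to tracking these data.

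For the induction, the base case $k=3$ is immediate: $Y_1=(0)$ has the single distance $n$, and \eqref{eq:idsplittingoddk} at $j=1$ collapses, using $F_0=0$, $F_{-1}=1$, $F_{-2}=-1$ and $F_1=F_2=1$, to $n=n$, while \eqref{eq:splittingoddk} reads $2\le n/q\le\infty$, i.e.\ the standing assumption $2q\le n$. For the step $k\to k+1$, say with $k$ odd (the even case is symmetric, with the roles of the two endpoints of the bounds interchanged), assume \eqref{eq:idsplittingoddk} and \eqref{eq:splittingoddk} for $k$. Then for every $j$ with $1\le j\le F_{k-2}$ the three distances of $Y_{F_k-j}$ are
$$
d_1=F_{k-3}q-F_{k-5}n\;\ge\;d_2=F_{k-4}n-F_{k-2}q\;\ge\;d_3=F_{k-1}q-F_{k-3}n ,\qquad d_1=d_2+d_3 ,
$$
where positivity and the ordering follow from \eqref{eq:splittingoddk} by Cassini-type identities for the relevant products of Fibonacci numbers. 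Applying Lemma \ref{lem:splitlargestdistance} repeatedly: passing from $Y_{F_k-1}$ to $Y_{F_k}$ splits $d_1=d_2+d_3$ and yields the two-distance splitting $n=F_{k-1}d_2+F_{k-2}d_3$, which is exactly \eqref{eq:idsplittingevenk} for $k+1$ at $j=F_{k-1}$; each of the next $F_{k-1}-1$ steps then splits the largest remaining distance $d_2=d_3+e$ with $e:=d_2-d_3=F_{k-2}n-F_k q\ge0$, and a short computation identifies the resulting splittings with \eqref{eq:idsplittingevenk} for $j=F_{k-1}-1,\dots,1$. The two distances flanking $0$ are carried along in parallel.

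It remains to produce the new half of the bound. The lower endpoint of \eqref{eq:splittingevenk} for $k+1$ coincides with that of \eqref{eq:splittingoddk} for $k$ and is already available; only $n/q\le F_{k+1}/F_{k-1}$, equivalently $d_3\ge e$, is new, and this is where the hypothesis $\disp(\mathcal{P}_n)=2/n$ enters. From the splittings just computed one checks $h(F_{k+1})\ge d_2$, so that $\disp(\mathcal{P}_n)=2/n$ forces $F_{k+1}d_2\le 2n$; since $F_{k+1}F_{k-4}-F_{k-1}F_{k-2}=2$ for $k$ odd, this inequality is precisely $n/q\le F_{k+1}/F_{k-1}$. (Here $3\le k\le m-1$ in the step, so $F_{k+1}\le F_m\le n$ and the window of width $F_{k+1}/n$ is admissible; the degenerate possibility $e=0$, i.e.\ $n/q$ equal to a Fibonacci ratio $F_k/F_{k-2}$, is compatible with $\gcd(q,n)>1$, the sequence $(y(k))$ then being periodic of period $F_k<n$, and all formulas remain valid under the convention that vanishing distances are ignored.)

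For the converse, assume \eqref{eq:splittingoddk} and \eqref{eq:splittingevenk} for all $3\le k\le m$. Then the induction above runs unconditionally, so the splitting of $Y_\ell$ and the two distances flanking $0$ are explicitly known for every $\ell$, hence so is $h(\ell)$, and a finite case analysis — one estimate per range $F_k-F_{k-2}\le\ell\le F_k$ — yields $\ell\,h(\ell)\le 2n$ throughout, i.e.\ $\disp(\mathcal{P}_n)=2/n$. The main obstacle is the last piece of the reduction: keeping accurate track, through all the splittings produced by Lemma \ref{lem:splitlargestdistance}, of which two of the three distances of $Y_\ell$ flank the point $0$, since this is exactly what pins down $h(\ell)$ and hence which windows are critical; a related but minor nuisance is the degenerate $\gcd(q,n)>1$ case, which is precisely the branch producing the second family $n=2F_m$, $q=2F_{m-2}$ of Theorem \ref{thm:intlats}.
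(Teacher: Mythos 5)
Your proposal is correct and follows essentially the same route as the paper: induction on $k$ driven by Lemmas \ref{lem:threedistances} and \ref{lem:splitlargestdistance}, identification of the successive splittings, and the empty box arising at the $j=1$ stage combined with a Fibonacci product identity to extract the new alternating bound on $n/q$ (your one-step induction merely merges the paper's two-step scheme with base cases $k=3,4$ into a single step). One remark: the ``main obstacle'' you flag is illusory, since $\{y(1),\dots,y(\ell-1)\}$ is the translate by $q$ of $Y_{\ell-1}$, so $h(\ell)$ is simply the largest distance of $Y_{\ell-1}$ and no bookkeeping of which gaps flank $0$ is needed --- this is exactly the paper's implicit use of the empty box of size $(\ell+1)\,d_1(Y_\ell)$.
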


\begin{proof}
We will prove this Lemma by induction on $k$.

Let $k=3$. The only $j$ we need to consider is $j=1$. We need to examine the splitting $Y_{F_3-1} = Y_1$. $Y_1$ trivially splits the scaled torus into
\begin{eqnarray*}
n &=& 1(F_0 q-F_{-2}n)+(F_2-1)(F_{-1}n-F_1 q)+(F_1-1)(F_2 q-F_0n)\\
&=& 1n.
\end{eqnarray*}
Furthermore, since $2q\le n$, the distances $F_0q-F_{-2}n\ge F_{-1}n-F_1q\ge F_2q-F_0n$ are ordered. 
Also \eqref{eq:splittingoddk}, which reads as 
$$\frac{2}{1} = \frac{F_3}{F_1}\le\frac{n}{q}\le\frac{F_2}{F_0} = +\infty,$$ 
holds since $2q\le n$.

Let $k=4$. Again, the only $j$ we need to consider is $j=1$. 
We need to examine the splitting $Y_{F_4-1} = Y_2$. 
But $Y_2$ trivially splits the scaled torus into
\begin{eqnarray*}
n &=& 1(F_{-1}n-F_1q)+(F_3-1)(F_2q-F_0n)+(F_2-1)(F_1n-F_3q)\\
&=& 1(n-q) + 1q
\end{eqnarray*}
as claimed. 
We assumed that the integration lattice has optimal dispersion, i.e. there is no empty box of size greater than $2n$. The splitting $Y_2$ gives us a box of size $3(n-q)$. 
Thus, we get 
$$3(n-q)\le 2n \qquad \Longleftrightarrow \qquad \frac{n}{q}\leq 3.$$ 
Together with the bound from the case $k=3$, this implies 
$$\frac{2}{1} = \frac{F_3}{F_1} \le\frac{n}{q}\le \frac{F_4}{F_2} = \frac{3}{1},$$ 
which is \eqref{eq:splittingoddk}. 
Moreover, it again follows that the distances of the splitting $F_{-1}n-F_1q\ge F_2p-F_0n\ge F_1n-F_3q$ are ordered.

We will now assume the Lemma has been proven for $k,k+1$ with $k$ odd and we will prove it for $k+2$ and $k+3$. Of course, we then have to assume $m\ge k+2$ and  $m\ge k+3$, respectively.

We start with the proof for $k+2$. 
We need to consider the splitting 
$Y_{F_{k+2}-j}$ for $j\in\{1,...,F_k\}$. 
This splitting still exists, since $m\ge k+2$. 
We know that $Y_{F_{k+1}-1}$ gave us the splitting $$n=1(F_{k-4}n-F_{k-2}q)+(F_k-1)(F_{k-1}q-F_{k-3}n)+(F_{k-1}-1)(F_{k-2}n-F_kq).$$ 
Since, by Lemma \ref{lem:splitlargestdistance}, we always split the largest distance and the distances in the splitting above are ordered, 
$Y_{F_{k+1}}$ gives us 
$$n=F_k(F_{k-1}q-F_{k-3}n)+F_{k-1}(F_{k-2}n-F_kq).$$ 
Now we need to split the largest distance $F_{k+2}-j-F_{k+1}$ more times. Thus, $Y_{F_{k+2}-j}$ gives us $$n=j(F_{k-1}q-F_{k-3}n) + (F_{k+1}-j)(F_{k-2}n-F_kq) + (F_k-j)(F_{k+1}q-F_{k-1}n).$$ 
This shows \eqref{eq:idsplittingevenk}. 
It remains to check that the distances are ordered.
To this end, we use the minimality with respect to the dispersion. 
The splitting gives us an empty box of size 
$(F_{k+2}-j+1)(F_{k-1}q-F_{k-3}n)$. 
From the assumption on the dispersion we conclude that 
$(F_{k+2}-j+1)(F_{k-1}q-F_{k-3}n)\le 2n$. 
Of course, if this condition is satisfied for $j=1$, it is satisfied for any $j\in\{1,...,F_k\}$. 
Thus, we have 
$$F_{k+2}(F_{k-1}q-F_{k-3}n)\le 2n \qquad \Longleftrightarrow \qquad \frac{F_{k+2}}{F_k}=\frac{F_{k+2}F_{k-1}}{F_{k+2}F_{k-3}+2}\le\frac{n}{q}.$$ 
Together with the bounds \eqref{eq:splittingevenk} with $k+1$ instead of $k$, 
we get the new bounds 
$$\frac{F_{k+2}}{F_k}\le\frac{n}{q}\le\frac{F_{k+1}}{F_{k-1}}.$$ 
These are the bounds \eqref{eq:splittingoddk}
with $k+2$ instead of $k$, which now also imply that the distances of the splitting were ordered. 
The proof for $k+3$ is completely analogous.
\end{proof}

\begin{proof}[Proof of Theorem \ref{thm:intlats}]
Assume that the integration lattice $\pointset{n}$ satisfies $\disp(\pointset{n})=2/n$.
Let $q$ be the generator of $\pointset{n}$ and assume that $2 q \le n$, passing to a torus equivalent integration lattice if necessary.
Let the positive integer $m$ be such that $n\in\{F_m,...,F_{m+1}-1\}$.
Since $n\ge 2$, we have $m\ge 3$.
If $m$ is odd, Lemma \ref{lem:intlatticefibapproximation} gives us for $k=m$ that $\frac{n}{q}$ must satisfy the inequalities 
$$\frac{F_m}{F_{m-2}}\le\frac{n}{q}\le\frac{F_{m-1}}{F_{m-3}}.$$ 
 Since fractions of Fibonacci numbers are optimal rational approximations of $\varphi$ (or in that case $\varphi^2$), the next rational approximation better than $\frac{F_m}{F_{m-2}}$ and $\frac{F_{m-1}}{F_{m-3}}$ would be $\frac{F_{m+1}}{F_{m-1}}$. 
But this is not possible, since $n<F_{m+1}$. Thus, if $\frac{n}{q}$ satisfies the above inequality, it has to be equal to one of the two sides.

If $\frac{n}{q} = \frac{F_m}{F_{m-2}}$, then $n=F_m$ and $q=F_{m-2}$ because of the restrictions on $n$ (since $F_{m+1}-1 < 2F_m$) and we have that $\pointset{n} = \fiblat{m}$ is a Fibonacci lattice.

If $\frac{n}{q} = \frac{F_{m-1}}{F_{m-3}}$, then $n=2 F_{m-1}$ because of the restrictions on $n$.
This implies $q=2 F_{m-3}$.

We conclude that the only possible optimal integration lattices are the lattices  described in the theorem.
Moreover, since the inequalities \eqref{eq:splittingoddk} and \eqref{eq:splittingevenk} for $3 \le k \le m$ are sufficient for $\mathcal{P}_n$ to have minimal dispersion $2/n$, these lattices indeed have optimal dispersion $2/n$.
\end{proof}

\begin{remark}\label{rem:formula}
 The proof of optimality of the Fibonacci lattice without characterizing all optimal integration lattices can be significantly simplified. In fact, it is then easier to directly show the formula
$$
 \disp(\fiblat{m}) = \frac{1}{F_m^2} \max_{3\le k \le m} F_k F_{m-k+3},
$$
which also follows from the above argument.
The maximum is attained for $k=3$ and $k=m$.
This was independently observed by M. Ullrich.
\end{remark}

\section{Optimal sets that are not integration lattices} \label{sec4a}

In this section, we give examples of pointsets $\pointset{n}$ satisfying  $\disp(\pointset{n}) =2/n$ that are not integration lattices. Of course, the restrictions given in Section \ref{sec2} have to be satisfied. These examples are obtained from the Fibonacci lattices $\fiblat{m}$ for even $F_m$ by shifting every other point by a fixed small vector, see Figure \ref{fig:distfib}. This leads to the distorted Fibonacci lattices 
\begin{align*}
\fiblat{m,\xi,\eta} := 
& \Big\{\Big(\frac{k}{F_m},\Big\{\frac{kF_{m-2}}{F_m}\Big\}\Big): k\in\{0,2,...,F_m-2\}\Big\} \\ 
& \, \cup \, \Big\{\Big(\frac{k}{F_m} + \frac{\xi}{F_m} ,\Big\{\frac{kF_{m-2}}{F_m}\Big\} + \frac{\eta}{F_m} \Big): k\in\{1,3,...,F_m-1\}\Big\}
\end{align*}
with $0 \le \xi,\eta < 1$. It turns out that for small enough $\xi$ and $\eta$, such a distortion does not alter the dispersion of the Fibonacci lattice.

For simplicity, we just analyse the case $\eta=0$ more closely, i.e., half of the Fibonacci lattice is shifted in the direction of the first coordinate. Then the argument from the previous section or the more direct argument mentioned in Remark \ref{rem:formula} lead to the conclusion that, 
as long as 
$$F_3F_m\ge\max_{4\le k\le m-1} (F_k+\xi)F_{m-k+3},$$
the dispersion does not grow. The maximum is attained (at least asymptotically) for $k=4$ and $k=5$. So we get the condition 
$$F_3F_m\ge (F_4+\xi)F_{m-1},$$
which is asymptotically equivalent to 
$$\xi\le \lim_{n\to \infty}\frac{F_3F_m}{F_{m-1}}-F_4 = 2\phi-3 = 0.236068\dots.$$

\begin{figure}[htb]\label{fig:distfib}
\caption{Fibonacci lattice, usual and distorted}
    \centering
    \begin{minipage}[t]{0.5\linewidth}
        \includegraphics[width=0.95\linewidth]{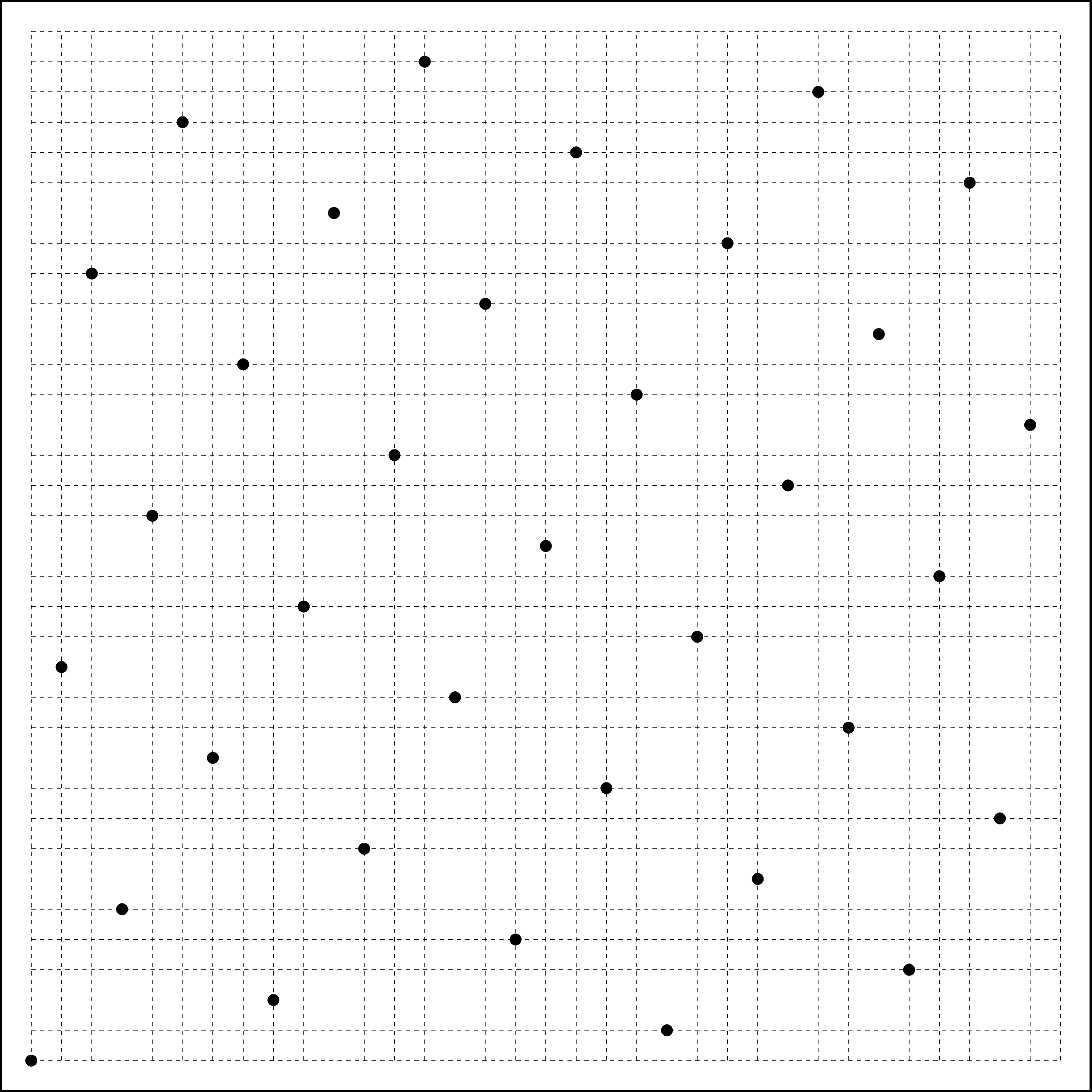}        
    \end{minipage}%
    \hfill
    \begin{minipage}[t]{0.5\linewidth}
        \includegraphics[width=0.95\linewidth]{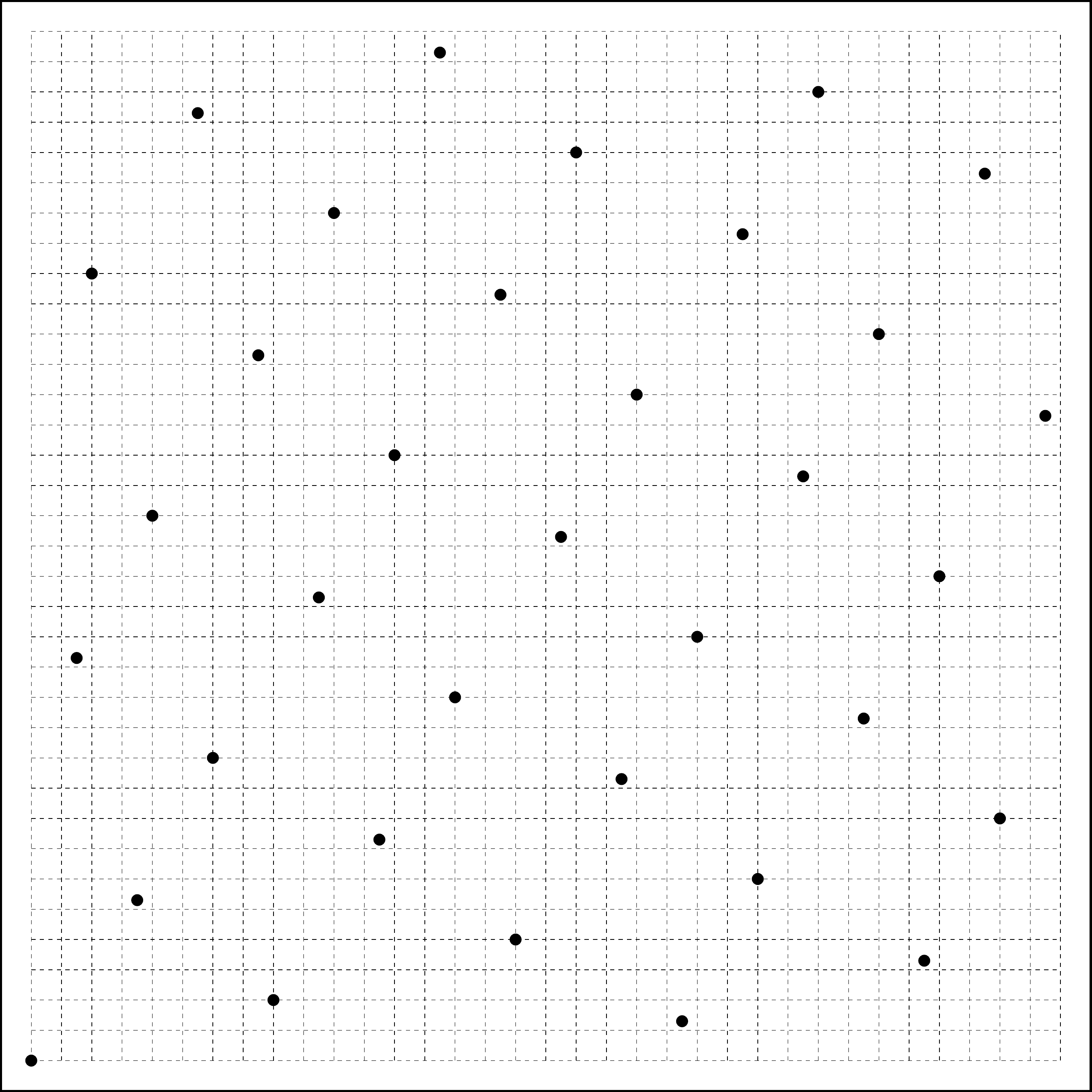}
    \end{minipage}%
\end{figure}

The distorted Fibonacci lattices above are neither integration lattices nor lattice point sets. We could not decide if there are lattice point sets with large cardinality that both have optimal dispersion and are not torus equivalent to an integration lattice. 

\section{The nonperiodic case} \label{sec4}

In this section, we study the dispersion of the Fibonacci lattice in the non-periodic case. Basically this means that we restrict the allowed rectangles in the definition of the dispersion to rectangles $B(x,y)$ where $x\le y$ coordinatewise. Let $\disp^\ast(\pointset{n})$ denote the corresponding dispersion of a point set $\pointset{n} \subset [0,1]^2$.

The best known lower bound (for $n\ge 16$) 
\begin{equation}
\label{eq:dj}
 \disp^\ast(\pointset{n}) \ge \frac{5}{4(n+5)}
\end{equation}
was proved in \cite{DJ2013}.
As far as we know, until now, the best known upper bound in dimension 2 for large $n$ is $\disp(\pointset{n}) \le 4/n$ if $n=2^m$ for some positive integer $m$ and a $(0,m,2)$-net $\pointset{n}$ in base 2. For general $n$, this implies that there exist pointsets $\pointset{n}$ of cardinality $n$ with $\disp(\pointset{n}) \le 8/n$.
Already the periodic dispersion of the Fibonacci lattices allows an improvement of these upper bounds.
We compute the nonperiodic dispersion of the Fibonacci lattice to further improve these bounds. 

\begin{theorem}\label{thm:upperboundfibnonperiodic}
 Let $m\ge 6$ be an integer. The Fibonacci lattice $\fiblat{m}$ without the point $(0,0)$ satisfies
 $$ \disp^\ast(\fiblat{m} \setminus \{(0,0)\}) = \frac{ 2(F_{m}-1)}{F_m^2}. $$
\end{theorem}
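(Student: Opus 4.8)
The plan is to reduce the nonperiodic problem to the periodic one with a careful accounting of the boundary. Recall that in the periodic case Theorem~\ref{thm:upperboundfib} gives $\disp(\fiblat{m}) = 2/F_m$, and from Remark~\ref{rem:formula} the extremal empty periodic rectangles are precisely those coming from the splittings at $k=3$ and $k=m$; in the scaled torus these are boxes of dimensions $1 \times 2F_{m-2}'$-type, i.e. one side of scaled length $1 = F_3/F_3$ spanning the full torus in one coordinate, and a complementary short side. So any large periodic empty box for $\fiblat{m}$ is essentially a full strip of scaled width $1$ in one coordinate, wrapped around. First I would set up the scaled picture exactly as in Section~\ref{sec3}: points $\big(k, \{kF_{m-2}\}_{F_m}\big)$ for $k=0,\dots,F_m-1$ on the scaled torus $[0,F_m]^2$, remove the point $(0,0)$, and note that a nonperiodic rectangle $B(x,y)$ with $x\le y$ is just a genuine axis-parallel box inside $[0,F_m]^2$.

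The key step is a case analysis on whether the extremal nonperiodic box "uses" the removed point $(0,0)$ or the boundary of the unit square. If the largest empty nonperiodic box lies in the interior and does not benefit from the deleted point, then it is also an empty periodic box, hence has area at most $2F_m$ in scaled units, i.e. $\le 2/F_m < 2(F_m-1)/F_m^2$ after rescaling — wait, that inequality goes the wrong way, so in fact such a box is \emph{smaller} than the claimed value and is not extremal. The extremal box must therefore exploit the missing point at the corner: it is a box of the form $[0,a]\times[0,b]$ (or a reflection) that would contain $(0,0)$ in a periodic wrapped version but, since $(0,0)$ is deleted, is genuinely empty. I would then compute, using the one-dimensional three-distance structure from Lemma~\ref{lem:threedistances} and Lemma~\ref{lem:intlatticefibapproximation} applied to the Fibonacci generator $q=F_{m-2}$, the largest such corner box. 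Concretely: the first coordinates of the remaining points are $1,2,\dots,F_m-1$ (scaled), so a box $[0,a]\times[0,b]$ avoids all of them in the first coordinate only if $a<1$, which is too small; the gain instead comes from taking $a$ up to the \emph{second}-smallest first-coordinate among points whose second coordinate lies in $[0,b]$, using that $(0,0)$ is gone. Optimizing $b$ against the gaps in the $y$-sequence, and using the Fibonacci splitting identities, should yield a box of scaled area $2(F_m-1)$, i.e. nonperiodic dispersion $2(F_m-1)/F_m^2$.

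For the matching lower bound (that the dispersion is at least this) I would exhibit the explicit empty box: take the rectangle anchored at the corner $(0,0)$ extending to the neighbour of $0$ in the relevant coordinate's three-distance partition at level $F_m-1$, whose dimensions multiply to exactly $2(F_m-1)$ in scaled units — this is a direct computation with consecutive Fibonacci numbers (the $m\ge 6$ hypothesis ensures the relevant splitting levels $k=3$ and $k=m$ are distinct and the small-$j$ cases of Lemma~\ref{lem:intlatticefibapproximation} apply cleanly). The main obstacle I anticipate is the upper bound bookkeeping: one must show that \emph{no} nonperiodic box does better, which requires ruling out boxes that touch one of the four sides of $[0,F_m]^2$ but not a corner, and boxes near the corner that wrap in a more complicated way. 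The clean way to handle this is to observe that any nonperiodic empty box is contained in a periodic empty box of the \emph{full} point set $\fiblat{m}$ (before deletion) \emph{unless} it is forced to contain $(0,0)$ in its closure; periodic empty boxes have area $\le 2F_m$, and the only way to enlarge past that is to place $(0,0)$ on the boundary and delete it, reducing to the corner case already analyzed. I would also double-check the threshold $m\ge 6$ against small cases to confirm the formula $2(F_m-1)/F_m^2$ is exactly right and not just asymptotically so.
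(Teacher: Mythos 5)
There is a genuine gap in your upper bound. Your dichotomy is: either the extremal nonperiodic box is already an empty periodic box of $\fiblat{m}$, in which case you bound its area by the periodic dispersion $2/F_m$, or it ``exploits'' the deleted corner point. The first horn proves nothing, because $2/F_m=2F_m/F_m^2$ is \emph{larger} than the target $2(F_m-1)/F_m^2$ (you even notice the inequality goes the wrong way, but then conclude anyway that such boxes are not extremal -- that is a non sequitur). The second horn rests on a false premise: $(0,0)$ is a corner of $[0,1]^2$, so it can never lie in the interior of any nonperiodic rectangle $[x_1,y_1]\times[x_2,y_2]\subseteq[0,1]^2$; deleting it does not change $\disp^\ast$ at all, and the extremal boxes have nothing to do with the corner. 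What the paper's argument actually uses is the finer structural output of the periodic proof: every maximal empty periodic box has side lengths $F_j/F_m$ and $F_{m+3-j}/F_m$ for some $3\le j\le m$, and the two area-maximizing families ($j=3$ and $j=m$, area $2/F_m$) have one side equal to $1$, i.e.\ they wrap all the way around the torus in that coordinate; a nonperiodic box contained in such a box must omit the splitting point in the wrapped coordinate, so that side is at most $(F_m-1)/F_m$ and the area at most $2(F_m-1)/F_m^2$, while the remaining $j$ contribute at most $F_5F_{m-2}/F_m^2$. Accordingly, the true maximizers are width-$2/F_m$ strips touching the top or bottom edge, e.g.\ $\bigl[\tfrac{k_0-1}{F_m},\tfrac{k_0+1}{F_m}\bigr]\times\bigl[\tfrac{1}{F_m},1\bigr]$ with $k_0F_{m-2}\equiv 1 \pmod{F_m}$ -- exactly the side-touching, non-corner boxes you proposed to ``rule out'', not corner boxes $[0,a]\times[0,b]$.

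Two further points. First, your lower-bound box and the comparison with the intermediate boxes are left uncomputed, and the latter is precisely where the hypothesis on $m$ matters: $2(F_m-1)-F_5F_{m-2}=F_{m-5}-2$, so the needed inequality $F_5F_{m-2}\le 2(F_m-1)$ holds only for $m\ge 8$. Indeed, for $m=6$ the nonperiodic box $[\tfrac18,\tfrac12]\times[\tfrac18,\tfrac34]$ contains no point of $\fiblat{6}$ in its interior and has area $\tfrac{15}{64}>\tfrac{14}{64}$, and similarly for $m=7$ one gets area $\tfrac{25}{169}>\tfrac{24}{169}$; so your instinct to re-examine the threshold $m\ge 6$ is well placed, but the argument as outlined could not detect this, since it never quantifies the boxes of dimensions $F_j/F_m\times F_{m+3-j}/F_m$ for $4\le j\le m-1$. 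Second, any correct proof must also supply the existence of these intermediate nonperiodic empty boxes (or at least bound them), and must justify that every empty nonperiodic box sits inside a \emph{maximal} empty periodic box of the characterized form; neither step appears in your outline.
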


We only sketch the proof here. The proof for the periodic case in particular shows that the maximal periodic boxes containing no points of $\fiblat{m}$ in the interior have sidelength $F_j/F_m$ and $F_{m+3-j}/F_m$ for some $j=3,4,\dots,m$ leading to the formula
$$
 \disp(\fiblat{m}) = \frac{1}{F_m^2} {\max \left\{ F_j F_{m+3-j} \,:\, j=3,4,\dots,m\right\}} .
$$
The maximum is attained for $j=3$ and $j=m$. But the corresponding rectangles with sidelength $2/F_m$ and $F_m/F_m=1$ are true periodic rectangles wrapping around one direction. However, it is easy to see that there are still nonperiodic rectangles with sidelength $2/F_m$ and $(F_m-1)/F_m$. Those rectangles have an area of $2(F_m-1)/F_m^2.$

On the other hand, for each $j=4,5\dots,m-1$, there are nonperiodic rectangles with sidelength $F_j/F_m$ and $F_{m+3-j}/F_m$  that do not contain any point of $\fiblat{m}$ in the interior. In the nonperiodic setting, the point $(0,0)$ can be safely omitted. We arrive at
$$
 \disp^\ast(\fiblat{m} \setminus \{(0,0)\}) = \frac{1}{F_m^2} \max \big\{ 2(F_m-1), \max \left\{ F_j F_{m+3-j} \,:\, j=4,5,\dots,m-1\right\}\big\}
$$
for $m\ge 5$. It is not too hard to check that, for $m\ge 6$, this maximum is attained for $j=5$. Clearly, $$\frac{F_5F_{m-2}}{F_m^2}\le \frac{2(F_m-1)}{F_m^2}$$ for $m\ge 6$. This leads to the claim of the theorem.

\section{Further results, final remarks and open problems} \label{sec5}

To put our results in the two-dimensional case into perspective, we now also consider the general $d$-dimensional case. Let $\disp(n,d)$ and $\disp^*(n,d)$ be the minimal dispersion of all pointsets $\pointset{n}$  in $[0,1]^d$ of cardinality $n$ in the periodic and non-periodic setting, respectively. The modifications in the definitions should be obvious.

The known lower and upper bounds imply that 
$$
 0 < a(d) := \liminf_{n\to\infty} n  \disp(n,d) \le \limsup_{n\to\infty} n  \disp(n,d) =: b(d) < \infty
$$
and 
$$
 0 < a^\ast(d) := \liminf_{n\to\infty} n \disp^\ast(n,d) \le \limsup_{n\to\infty} n  \disp^\ast(n,d) =: b^\ast(d) < \infty.
$$
The inequalities
\begin{equation}\label{eq:triv}
 a^\ast(d) \le a(d) \qquad \text{and} \qquad b^\ast(d) \le b(d)
\end{equation}
are trivial.
It is natural to study these quantities and to determine if $a(d)=b(d)$ and/or $a^\ast(d)=b^\ast(d)$, i.e., if the limits $\lim_{n\to\infty} n  \disp(n,d)$ and/or $\lim_{n\to\infty} n  \disp^\ast(n,d)$ exist. 
For $d=1$, equidistant points are optimal. This implies $a(1)=b(1)=a^\ast(1)=b^\ast(1)=1$. 

Already the case $d=2$ is much more difficult. In the periodic case, Theorems \ref{thm:lowerbound} and \ref{thm:upperboundfib} show that 
\begin{equation}\label{eq:a2b2}
 a(2) = 2 \qquad \text{and} \qquad 
 b(2) \le \frac{3+\sqrt{5}}{2} = 2.6180339\dots .
\end{equation}
Here $b(2)$ is estimated via monotonicity of $\disp (n,d)$ in $n$ together with $ \disp (n,d) = 2/n$ if $n$ is a Fibonacci number or twice a Fibonacci number.
The problem, if $b(2)=a(2)$ and, if not, the computation of $b(2)$ remain open. In the non-periodic case, the lower bound \eqref{eq:dj}, Theorem \ref{thm:upperboundfibnonperiodic} and inequalities \eqref{eq:triv} and \eqref{eq:a2b2} show that
$$
 \frac{5}{4} \le a^\ast(2) \le 2 \qquad \text{and} \qquad 
 b^\ast(2) \le \frac{3+\sqrt{5}}{2} = 2.6180339\dots .
$$
The exact determination of $a^\ast(2)$ and $b^\ast(2)$ remains open.

For general $d$, we only know that
$$ d \le a(d) \le 2^{7d} \qquad \text{and} \qquad b(d) \le 2^{7d+1} $$
as well as 
$$
  \frac{\log_2 d}{4} \le a^\ast(d)  \le 2^{7d} \qquad \text{and} \qquad b^\ast(d) \le 2^{7d+1}.
$$
The upper bounds follow from a construction using digital nets due to G. Larcher, see \cite{AHR17}.
The lower bound for $a(d)$ follows from the result of \cite{U2018}, the lower bound for $a^\ast(d)$ from the main result of \cite{AHR17}. Further upper bounds not  directly applicable to this problem or yielding worse bounds can be found in the papers \cite{K18,R18,T19}.

The lower bound from \cite{U2018} for general dimension $d$ is 
$$ \disp(\pointset{n}) \ge \frac{d}{n}, $$
which is equal to $d/n$ for $d\ge n$.
We now discuss the case of integration lattices $\pointset{n} \subset [0,1]^d$ with optimal periodic dispersion. It turns out that, in contrast to the already considered case $d=2$, such integration lattices can only exist for small $n$. We restrict the discussion here to the case $d=3$.

\begin{theorem}\label{thm:nointlats}
There are no integration lattices in $3$ dimensions which have more than $4$ points and satisfy 
$$ \disp(\pointset{n}) = \frac{3}{n}.$$
\end{theorem}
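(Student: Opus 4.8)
The plan is to reduce the three–dimensional question to the two–dimensional theory of Section~\ref{sec3} via the three coordinate–plane projections of $\pointset n$, after first extracting structural information by the slab argument of Section~\ref{sec2}. I would adapt that argument to three dimensions: for $x\in[0,1)$ let $n(x)$ be the number of points of $\pointset n$ in the periodic slab $[x,x+3/n)\times[0,1)^2$, so that $\int_0^1 n(x)\dint x=3$. If $n(x_0)\le 2$ for some $x_0$, then enlarging the slab slightly to $[x_0-\eps,x_0+3/n)$ (still containing at most two points, by discreteness of the first coordinates) and avoiding the at most two points in its two–dimensional cross section by a periodic rectangle of area arbitrarily close to $1$ — the complement of a thin strip around the first coordinate of one point, intersected with the complement of a thin strip around the second coordinate of the other — produces an empty box of volume exceeding $3/n$, a contradiction. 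Hence $n(x)\equiv 3$; running the same argument in the other two coordinate directions shows that every arc of length $3/n$ on the circle contains exactly three of the values $\{kq_i/n\}_{k=0}^{n-1}$, counted with multiplicity, which forces $\gcd(q_1,n),\gcd(q_2,n)\in\{1,3\}$.

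Next, if $R$ is any empty axis–parallel periodic rectangle of a two–dimensional coordinate projection of $\pointset n$, then $R\times[0,1)$ (the product taken in the missing coordinate) is an empty box of the three–torus of the same area. Hence each of the three coordinate–plane projections — which, after the substitution $j\equiv kq_1\pmod n$ in the $(2,3)$–plane, are two–dimensional integration lattices with generators $q_1$, $q_2$ and $r:=q_1^{-1}q_2\bmod n$ when $\gcd(q_1,n)=1$ — has dispersion at most $3/n$. I would then rerun the continued–fraction analysis of Lemma~\ref{lem:intlatticefibapproximation}, now only requiring that no splitting $Y_\ell$ produce an empty box larger than $3n$ in the scaled torus; this yields a precise description of the integration lattices $(n,q)$ with $\disp(n,q)\le 3/n$ — in particular the continued fraction of $q/n$ (or of $(n-q)/n$) must have all partial quotients bounded, so that $q$ is confined to a tightly controlled range depending on $n$.

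Finally I would derive the contradiction. The three generators satisfy the congruence $q_1 r\equiv q_2\pmod n$, and, using the torus symmetries together with permutations of the three coordinates, also the companion congruences in which $q_1,q_2,r$ occupy any of the three roles. Combining these with the constraints from the previous paragraph and with Cassini/d'Ocagne–type Fibonacci identities, one checks that this system of congruences can hold only for small $n$: for large $n$ it forces the $(2,3)$–projection to be, up to torus symmetry, an (anti-)diagonal lattice, whose dispersion is of order $1$ and hence far larger than $3/n$, contradicting the conclusion of the second paragraph. The configurations with $\gcd(q_i,n)=3$ are disposed of separately — $\pointset n$ then lies on $n/3$ equally spaced parallel planes carrying three points each, and a short analysis of the positions of those triples produces an empty box of volume at least $4/n>3/n$. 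After also setting aside the degenerate configurations in which a generator makes a projection essentially one–dimensional, the finitely many remaining small $n$ are checked directly, and one obtains the bound $n\le 4$.

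The hard part will be the precise two–dimensional characterisation in the second paragraph — exactly which integration lattices satisfy $\disp(n,q)\le 3/n$ — together with the bookkeeping that shows the resulting system of congruences for $q_1,q_2,r$ is unsatisfiable for every nondegenerate lattice with $n>4$. Since the admissible family is strictly larger than the (twice-)Fibonacci lattices of Theorem~\ref{thm:intlats} (it contains, for instance, lattices whose generators have all partial quotients equal to $2$), the incompatibility must be extracted from the modular relation $q_1 r\equiv q_2\pmod n$ and the standard relations between the continued fractions of $q$, $n-q$ and $q^{-1}\bmod n$, rather than from a crude count of admissible generators; careful handling of the $\gcd=3$ and degenerate cases, of limiting wrap–around rectangles, and of the small–$n$ verification are the remaining technical points.
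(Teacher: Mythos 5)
There is a genuine gap: the entire third paragraph, where the contradiction is supposed to come from, is only asserted, and it is precisely the part that carries the mathematical content. You correctly note that the family of two-dimensional lattices with $\disp \le 3/n$ is strictly larger than the (twice-)Fibonacci family, so your only remaining leverage is the compatibility of the three generators $q_1,q_2,r$ modulo $n$; but you never show that this system is unsatisfiable for $n>4$ (the claim that it ``forces the $(2,3)$-projection to be an (anti-)diagonal lattice'' is not argued), and it is far from clear that it even is unsatisfiable: whether three residues $q_1$, $q_2$, $q_1^{-1}q_2 \bmod n$ can simultaneously have the bounded-partial-quotient behaviour needed for $\disp\le 3/n$ is a delicate number-theoretic question (of Zaremba type), and nothing in your sketch rules it out, let alone pins the threshold at exactly $n\le 4$. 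So as written the proposal does not prove the theorem.

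The missing idea, which is how the paper proceeds, is that the three-dimensional hypothesis gives you much more about a single projection than ``its dispersion is at most $3/n$''. If a periodic rectangle $R$ in a two-dimensional coordinate projection contains at most \emph{one} projected point in its interior, then splitting $R\times[0,1)$ along the third coordinate of that single preimage yields an empty three-dimensional box of volume (essentially) the area of $R$; hence the assumption $\disp(\pointset{n})=3/n$ bounds by $3/n$ the area of every projected rectangle containing at most one point, not just the empty ones. Combined with Lemma \ref{lem:splitlargestdistance} and with Lemma \ref{lem:intervalsnexttoeachother} (or a pigeonhole argument when $a_1>a_2+a_3$), which guarantee a largest gap $d_1$ adjacent to a gap of length at least $d_2$, this produces for each $k$ a rectangle of area $(k+1)(d_1+d_2)$ containing one point, and the inequalities $(k+1)(d_1+d_2)\le 3n$ for the first few splittings ($k=2,\dots,6$) already close off every branch of a short case distinction for $n>4$ (with $n=7$, $q=2$ checked by hand). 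Your empty-box-only constraint discards exactly this extra factor coming from the single interior point, which is why your plan gets stuck; if you incorporate the one-point-box bound, a single projection suffices and the cross-projection congruence machinery, the $\gcd\in\{1,3\}$ analysis, and the slab argument of your first paragraph all become unnecessary.
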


The crucial tool is the following additional information on the splittings induced by a two-dimensional integration lattice. Here we freely use the notation and language introduced in Section \ref{sec3}.

\begin{lem}\label{lem:intervalsnexttoeachother}
Let $\pointset{n} \subset [0,1]^2$ be an integration lattice with $n$ points and generator $q$. 
Assume that the induced splitting has more than one different distance (it has distances $d_1>d_2$ and maybe distance $d_3<d_2$). Then there is an empty interval of distance $d_1$ and an empty interval of distance at least $d_2$ which are next to each other.
\end{lem}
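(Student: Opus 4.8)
The plan is to deduce the statement from the fine structure of the partition of the scaled one-dimensional torus by the sequence $\big(n\{kq/n\}\big)$, i.e.\ from the refined (``return time'') form of the Three-Gap Theorem \cite{S1958} that already underlies Lemma~\ref{lem:threedistances} and Lemma~\ref{lem:splitlargestdistance}. The case of two distances is immediate: if the splitting has only the distances $d_1>d_2$, then \emph{every} empty interval has length $d_1$ or $d_2$, hence length at least $d_2$, so any interval of length $d_1$ (one exists) together with either of its neighbours already proves the claim. From now on I assume there are exactly three distances $d_1>d_2>d_3$, which by Lemma~\ref{lem:threedistances} satisfy $d_1=d_2+d_3$.

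Let $\ell$ be the number of points. The refined Three-Gap Theorem provides positive integers $p,p'$ with $p+p'=\ell+m$, where $m\ge1$ is the multiplicity of $d_1$, such that, with the points $y(0),\dots,y(\ell-1)$ listed in cyclic order, the empty interval immediately to the right of $y(a)$ is $[y(a),y(a+p)]$ of length $e_p:=n\{pq/n\}$ when $a+p\le\ell-1$, is $[y(a),y(a-p')]$ of length $e_{p'}:=n\{-p'q/n\}$ when $a+p\ge\ell$ but $a\ge p'$, and is $[y(a),y(a+p-p')]$ of length $e_p+e_{p'}=d_1$ in the remaining range $\ell-p\le a\le p'-1$. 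Hence $\{d_2,d_3\}=\{e_p,e_{p'}\}$; the $d_3$-intervals are whichever of the ``$+p$'' and ``$-p'$'' intervals are the shorter ones; and the $d_1$-intervals are precisely the $m=p+p'-\ell$ intervals of the third type, namely $[y(a),y(a+p-p')]$ with $\ell-p\le a\le p'-1$. (This is consistent with Lemma~\ref{lem:splitlargestdistance}: inserting the next point replaces one such $d_1$-interval by a $d_2$-interval sitting next to a $d_3$-interval.)

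With this structure in place I would examine the two \emph{extreme} $d_1$-intervals, those with $a=\ell-p$ and $a=p'-1$, and read off from the neighbour rule above the length of the interval immediately to the left of the first and immediately to the right of the second. A short case distinction — according to whether $e_p<e_{p'}$ or $e_p>e_{p'}$ (i.e.\ which return time carries the short gap) and to the size of that return time relative to $\ell$ — shows that, \emph{unless} the return time generating the $d_3$-gaps is the smaller one, at least one of these two intervals abuts an interval of length $\max\{e_p,e_{p'}\}=d_2$ (or one of length $d_1$), which is exactly the asserted adjacency. I expect the genuine obstacle to be the excluded case: there the unique or ``boundary'' $d_1$-interval is, a priori, flanked by two $d_3$-intervals, and the three-gap bookkeeping alone does not rule this out. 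To handle it one must use that the splitting really comes from the integration lattice $\pointset{n}$ in the range relevant here (the language of Section~\ref{sec3}); concretely I would recast the claim as an inequality between $p$, $p'$ and $\ell$ and settle it from the continued-fraction expansion of $q/n$, whose convergents are exactly the values taken by $p$ and $p'$ and are interlaced in the way that prevents the largest gap from being isolated. Carrying out this last reduction, and checking it against the hypotheses, is the crux of the argument.
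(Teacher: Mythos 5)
Your text is a plan rather than a proof, and the step you defer is exactly where the content of the lemma lies. The two-distance case and the refined three-gap description of the neighbour structure (return times $p,p'$, the $d_1$-gaps being the right-gaps of $y(a)$ for $\ell-p\le a\le p'-1$) are fine, but in the case you yourself single out as problematic --- when the $d_3$-gaps are carried by the smaller of the two return times --- you only express the hope that continued-fraction bookkeeping will exclude the configuration in which a $d_1$-gap is flanked by two $d_3$-gaps. It will not, because that configuration actually occurs: take $q/n=5/12$ (so $2q\le n$) and $\ell=4$. The scaled points are $0,5,10,3$, the gaps are $3,2,5,2$, hence $d_1=5$, $d_2=3$, $d_3=2$; the $d_3$-gaps belong to the return time $p'=2<3=p$, and the unique $d_1$-interval $(5,10)$ has both neighbours of length $d_3$. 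So no amount of analysis of the frozen configuration $Y_\ell$ (nor of the continued fraction of $q/n$ alone) can close your case distinction; whatever saves the statement must come from hypotheses beyond a single splitting, and that missing ingredient is precisely what your proposal does not supply.

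For comparison, the paper does not argue from a fixed $Y_\ell$ at all: it argues along the history of the splittings, using Lemma~\ref{lem:splitlargestdistance} (only largest gaps are ever split, so no current $d_2$-gap was split) together with the observation that each split of a $d_1$-gap produces its $d_2$- and $d_3$-piece in a fixed left--right orientation, since the inserted point always lands at the same offset inside the gap it splits; two $d_3$-neighbours of a surviving $d_1$-gap would then force contradictory orientations. Your caution is in fact well placed: in the example above the right-hand $d_3$-gap $(10,12)$ was created by splitting a gap of length $7$, not of length $d_1=5$, so the paper's argument as literally written is also defeated by it, and the lemma can only be maintained under the additional hypotheses in force where it is invoked. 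But as submitted, your proposal identifies the obstacle without overcoming it, so it does not prove the statement.
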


\begin{proof}
Assume we have an interval of distance $d_1$. Let the interval on the left and right have distance $d_3$. This distance $d_3$ must have come from splitting a distance $d_1$ or a distance $d_2$. A distance $d_2$ has not been split, as there is a distance $d_1$ remaining and we always split the largest distance. Thus, both distances of length $d_3$ next to the $d_1$-distance come from splitting a $d_1$-distance. A $d_1$ distance is always split into a $d_2$ and a $d_3$ distance. Furthermore, the $d_2$ is always to the left of the $d_3$ or the $d_2$ is always to the right of the $d_3$. In any way, if there were distances $d_1$ which were split to both sides of our $d_1$ distance, then there are now a $d_3$ and a $d_2$ distance next to our $d_1$.
\end{proof}

\begin{proof}[Proof of Theorem \ref{thm:nointlats}]
Observe that the projection of an integration lattice in dimension 3 onto any of the coordinate planes produces an integration lattice in dimension 2. Let $q$ be the  generator of one of those projected lattices. Splitting the 3-dimensional torus along the third coordinate of a point shows that a lower bound for the dispersion of the 3-dimensional integration lattice is given by the maximal size of a periodic rectangle for the 2-dimensional projection containing at most one point in the interior.

Now, assume that the first $k$ points o induce the splitting 
$$n=a_1d_1+a_2d_2+a_3d_3$$ 
for the 2-dimensional projected lattice.
Then, we observe the following: if $a_1>a_2+a_3$, an application of the pigeonhole  principle shows that there are two empty intervals of size $d_1$ next to each other. 
In any other case, Lemma
\ref{lem:intervalsnexttoeachother} tells us that there is an interval of size $d_1$ next to an interval of size $d_2$. Thus, in the first case, there is a two-dimensional box of size $(k+1)*(d_1+d_1)$ and in the second case there is a two-dimensional box of size $(k+1)*(d_1+d_2)$ which contains only one point. 
The sizes of those boxes are lower bounds for the $3$-dimensional dispersion.

Assume now that the 3-dimensional integration lattice has dispersion $3/n$. We will show a contradiction if $n$ is sufficiently large. Without loss of generality, assume $q\le n$. We consider now the projected 2-dimensional lattice with generator $q$. The first two points induce the splitting $$n = 1(n-q)+1q$$
and give rise to a relevant box  of size $$(2+1)\cdot (n-q+q) = 3n\le 3n.$$

The first $3$ points induce the splitting $$n=1(n-2q)+2q.$$
The proof will be finished by a giant case distinction:

\begin{enumerate}
\item $n-2q<q\Longleftrightarrow n<3q$. We have the largest box of size 
$$4(q+q)\le 3n\Longleftrightarrow\frac{8q}{3}\le n.$$ 
Together, we obtain $8q/3\le n<3q$. 
The next splitting is $$n=1q+2(n-2q)+1(3q-n).$$
We again need to distinguish two cases.
\begin{enumerate}
\item $n-2q<3q-n\Longleftrightarrow n<5q/2$. This is a contradiction.
\item $n-2q\ge 3q-n\Longleftrightarrow n\ge 5q/2$. We have the largest box $$5(q+n-2q)\le 3n\Longleftrightarrow n\le \frac{5q}{2}.$$ This is a contradiction.
\end{enumerate}
\item $n-2q\ge q \Longleftrightarrow n\ge 3q$. We have the largest box $$4(n-2q+q) \le 3n\Longleftrightarrow n\le 4q.$$ 
Together, we obtain $3q\le n\le 4q$. The next splitting is $$n=1(n-3q)+3q.$$
We again need to distinguish two cases.
\begin{enumerate}
\item $n-3q\ge q \Longleftrightarrow n\ge 4q$. Together with $n\le 4q$ we have $n=4q$. This is a contradiction for $n>4$.
\item $n-3q<q \Longleftrightarrow n<4q$. We have the largest box $$5(q+q)\le 3n\Leftrightarrow \frac{10q}{3}\le n.$$ 
Together, we obtain $10q/3\le n < 4q$. 
The next splitting is $$n=2q+2(n-3q)+1(4q-n).$$
We again need to distinguish two cases.
\begin{enumerate}
\item $n-3q\le 4q-n\Longleftrightarrow n\le 7q/2$. We have the largest box $$6(q+4q-n)\le 3n\Longleftrightarrow 10q/3\le n.$$ 
Together, we obtain $10q/3\le n\le 7q/2$. The next splitting is $$n=q+2(4q-n)+3(n-3q).$$
Luckily, we do not need a case distinction here, as we already know the ordering of the distances. We have the largest box $$7(q+4q-n)\le 3n\Longleftrightarrow 7q/2\le n.$$ Thus, $n=7q/2$. This is a contradiction for all $n$, except for $n=7$ and $q=2$. This case can be excluded  separately by exhaustively trying all possibilities. 
\item $n-3q>4q-n\Longleftrightarrow n>7q/2$. We have the largest box $$6(q+n-3q)\le 3n\Longleftrightarrow n\le 4q.$$ In total, $7q/2<n<4q$. The next splitting is $$n=q+3(n-3q)+2(4q-n).$$ Again, we do not need a case distinction. We have the largest box $$7(q+n-3q)\le 3n\Longleftrightarrow n\le 7q/2.$$ This is a contradiction.
\end{enumerate}
\end{enumerate}
\end{enumerate}
Every branch of the case distinction failed for $n>4$. This proves the theorem.
\end{proof}

\bibliographystyle{abbrv}

\end{document}